\newtheoremstyle{theorem}
  {10pt}          
  {10pt}  
  {\sl}  
 {}
  {\bf}  
  {. }    
  { }    
  {}     
\theoremstyle{theorem}
\newtheorem{theorem}{Theorem}[section]
\newtheorem{corollary}{Corollary}[section]
 \newtheorem{lemma}{Lemma}[section]
 \newtheorem{remark}{Remark}[section]
\numberwithin{equation}{section}
\newtheoremstyle{defi}
  {10pt}          
  {10pt}  
  {\rm}  
  {}  
  {\bf}  
  {. }    
  { }    
  {}     
\theoremstyle{defi}
\newcommand{\mr}{\mathbb{R}}
\newcommand{\vp}{\varphi}
\newcommand{\me}{\mathbb{E}}
\newcommand{\m}{\mathbb}
\newcommand{\p}{\partial}
\newcommand{\ee}{\eta^\epsilon}
\newcommand{\eps}{\epsilon}
\begin{document}
\baselineskip = 20 pt
\title[stochastic Camassa-Holm equation]
{Modulation analysis of the stochastic Camassa-Holm equation with pure jump noise}%
\author[Y. Chen]{Yong Chen$^{1}$}%
\author[J. Duan]{Jinqiao Duan$^{2}$}
\author[H. Gao]{Hongjun Gao$^{3}$}
\author[X. Guo]{Xingyu Guo$^{1}$}
\address{1. Department of Mathematics, Zhejiang Sci-Tech University, Hangzhou 310018, PR China}

\address{2. Department of Applied Mathematics and Department of Physics, Illinois Institute of Technology, Chicago, IL 60616, USA}
\address{3. School of Mathematics, Southeast University, Nanjing 211189, PR China}


\thanks{E-mail: youngchen329@126.com,\ duan@iit.edu,\ gaohj@hotmail.com}
\subjclass[2020]{60H15, 60F05, 35R60.}%
\keywords{stochastic Camassa-Holm equation, pure jump noise, random modulation, solitary waves.}


\begin{abstract}
We study the stochastic Camassa-Holm equation with pure jump noise.
We prove that if the initial condition of the solution is a solitary wave solution of the unperturbed equation,
the solution decomposes into the sum of a randomly modulated solitary wave and
 a small remainder. Moreover, we derive the equations for the modulation parameters
 and show that the remainder converges to the solution of a stochastic linear equation
 as amplitude of the jump noise tends to zero.

\end{abstract}
\maketitle
\section{Introduction}

\subsection{Background}

The Camassa-Holm (CH) equation
\begin{equation}\label{11a}
u_t-u_{xxt}+3uu_x+2k u_x-2u_xu_{xx}-uu_{xxx}=0,\; t>0,\;x\in\mr,
\end{equation}
with $k\geq0,$ was derived by Camassa and Holm in
\cite{camassa} as a model of shallow water waves.
Here $u$ denotes the fluid velocity in the $x$ direction or,
equivalently, the height of the water's free surface above a flat
bottom \cite{camassa,Johnson02}. Eq. \eqref{11a} was originally derived by Fuchssteiner and Fokas
\cite{Fuchssteiner,Fuchssteiner96} as a bi-Hamiltonian generalization
of KdV. A rigorous justification of the derivation of Eq. \eqref{11a} as an approach to the governing equations for water waves was recently provided by Constantin and Lannes \cite{cons2009}.
Eq. \eqref{11a} was also arisen as an equation of the geodesic
flow for the $H^1$ right-invariant metric on the Bott-Virasoro group (if $k > 0$) \cite{Misi} and on the
diffeomorphism group (if $k = 0$) \cite{CoKo,CoKo1}.

The CH equation \eqref{11a} is completely integrable \cite{camassa,Const01}, which has the bi-Hamiltonian
structure \cite{camassa,Fuchssteiner}
\begin{align}
&m_t=\mathcal{J}_1\frac{\delta H_1[m]}{\delta m}=\mathcal{J}_2\frac{\delta H_2[m]}{\delta m},\label{i6}\\
&\mathcal{J}_1=-(\p_xm+m\p_x+2k\p_x),\;\; \mathcal{J}_2=-(\p_x-\p_x^3)
\end{align}
with a momentum density
$m:=u-u_{xx}$ and the two Hamiltonians
\begin{align}
&H_1[m]=H_1(u)=\frac12\int_\mr u^2(x)+u_x^2(x)dx=\frac12\int_\mr umdx,\label{i1}\\
&H_2[m]=H_2(u)=\frac12\int_\mr u^3(x)+u(x)u_x^2(x)+2k u^2(x)dx.\label{i1a}
\end{align}
The CH equation \eqref{11a}  can be written in Hamiltonian form as
\begin{align}\label{i2}
\partial_t\frac{\delta H_1(u)}{\delta u}=-\partial_x\frac{\delta H_2(u)}{\delta u}.
\end{align}

The Cauchy problem for the CH equation \eqref{11a} has been studied extensively. For initial data $u_0\in H^s(\mathbb{R}), s>3/2$,
Eq. \eqref{11a} is locally well posed \cite{Cons98475,danchin01,lio00}.
Moreover, Eq. \eqref{11a} has global strong solutions \cite{Cons98475,Cons00321} and also finite time blow-up solutions \cite{Cons98475,Cons98229,Cons00321,Cons0075,danchin01,lio00}. On the other hand, it
has global weak solutions in $H^1(\mathbb{R})$ \cite{Bressan07,Bressan071,Coclite06,cones98,Const0045,Holden0739,Holden07,Holden09,xin00}.
The ill-posedness of the CH equation in $H^{3/2}$ and in the critical space $B^{3/2}_{2,r}, 1 < r <\infty$ is proved in \cite{GLMY19}.

The CH equation \eqref{11a} possesses smooth solitary-wave solutions called
solitons if $k>0$ \cite{camassa94} or peaked solitons if $k=0$ \cite{camassa}.
In particular, when $k > 0$,
Eq. \eqref{11a} possesses the smooth soliton with the expression $u(t,x)=\varphi_c(x-ct+x_0), c>2k, x_0\in\mr$,
in a parametric form as follows \cite{Jo,LiZh}
\begin{align*}
&u(t,x)=\frac{c-2k}{1+(2k/c)\sinh^2\theta},
\end{align*}
where
\begin{align*}
\theta=\frac1{2\sqrt{k}}\sqrt{1-\frac{2k}c}(y-c\sqrt{k}t),\;
x=\frac{y}{\sqrt{k}}+\ln\frac{\cosh(\theta-\theta_0)}{\cosh(\theta+\theta_0)},\;
\theta_0=\tanh^{-1}\sqrt{1-\frac{2k}c}.
\end{align*}
The  soliton $\varphi_c$ satisfies the following equation
\[
-c\varphi_c+c\varphi_{cxx}+\frac32\varphi_c^2+2k\varphi_c=\varphi\varphi_{cxx}+\frac12\varphi_{cx}^2, x\in\mr.
\]
It is shown in \cite{camassa94,constr02} that the CH equation \eqref{11a} admits the smooth and positive solitary
wave solution $\vp_c$ with an even profile decreasing from its peak height $c-2k.$  Moreover, $|\vp_c'|\leq \vp_c$ and as $|x| \to \infty$,
\begin{align}
\vp_c(x)=O(\exp(-\sqrt{1-\frac{2k}c}|x|)).
\end{align}
The CH equation \eqref{11a} has even more complex solutions, such as multi-solitons
which can be given also in a parametric form like one soliton \cite{CaHoTi,CoGeIv,LiZh,Mats}.

Some kinds of the stability of solitons of the CH equation are considered in \cite{constr02,dika07,WL22}.
In \cite{constr02}, the nonlinear stability of the 1-solitons is proved by applying the general spectral method.
In \cite{dika07}, the orbital stability of the train of N solitary waves of the CH equation in energy space $H^1$ is investigated .
In \cite{WL22}, the dynamical stability of smooth N-solitons of the CH equation in Sobolev space $H^N$ and the orbital stability of smooth double solitons
in the space $H^2$ are proved for all the time.
The stability of peakons and multipeakons of the CH equation are established in \cite{constr00,dm09}.

\subsection{Stochastic Camassa-Holm equation}

Since there are some uncertainties in geophysical and
climate dynamics \cite{ad01,ddh15,duan}, it is widely recognized to
take random effect into account in mathematical models.
Using stochastic variational method \cite{ddh15,ddh16}, the following stochastic CH equation with Brownian motion was derived in \cite{cddh18},
\begin{equation}\label{sch-bm}
dm+(\p_xm+m\p_x)v=0,
\end{equation}
where $m=u-u_{xx}$ and the stochastic vector field $v$, defined by
\[
v(x,t)=u(x,t)dt+\sum_{i=1}^N\xi_i(x)\circ dW^i_t.
\]
The vector $v(t, x)$ represents random spatially correlated shifts in
the velocity, the functions $\xi_i(x), i =
1, 2, \ldots, N$ are the spatial correlations, $W^i_t
, i = 1, 2, \ldots, N$ are independent Brownian motions and $\circ$ is the Stratonovich product. Eq. \eqref{sch-bm} admits peakon solutions and
isospectrality \cite{cddh18}.
Applying a finite element discretization,
 the authors in \cite{BCH} reveal that peakons can still form in the presence of stochastic perturbations.
 The local existence and uniqueness of strong solution of \eqref{sch-bm} is proved in \cite{AlBrDa}.
The well-posedness of stochastic CH equation with the initial value $u_0\in H^s, s>3/2$ is proved in \cite{Chen12,tang18}.
The martingale solution in $H^1$
is established in \cite{CDG21a} under the condition that $m_0=u_0-u_{0xx}$ is a positive
 regular Borel measures, which is improved in \cite{GaKaPa22} with $u_0\in H^1$.
The global well-posedness of the viscous Camassa-Holm equation with gradient noise
is established in \cite{HoKaPa22}.

The analysis of the L\'evy noise is different from \cite{ddh15,ddh16}
and is motivated by the requirement that the noise must preserve some invariance properties of stochastic Hamiltonian structure.
Thus, it needs to find an analogue of the Stratonovich integral with respect
to compensated Poisson random measure.  The work of Marcus \cite{mau81}, developed later by Applebaum \cite{app09},
Kunita \cite{{kur04}}, Chechkin and Pavlyukevich \cite{Chechkin}, provides a framework to resolve this technical issue.
Using stochastic variational method \cite{ddh15,ddh16}, we derive the following stochastic CH equation with pure jump noise \cite{chendg},
\begin{equation}\label{sch-le}
dm+(um_x+2mu_x)dt+ m\diamond dL(t)=0,
\end{equation}
where $ "\diamond"$ is the Marcus product 
and $L(t)$ is a L\'{e}vy motion  with pure jump.
The local well-posedness and large deviations of \eqref{sch-le} with the initial value $u_0\in H^s, s>3/2$ are given
in \cite{chgao17} as an example. The global existence, the wave breaking phenomena and moderate deviations  of \eqref{sch-le}
with $u_0\in H^s, s>3/2$
are proved in \cite{chendg}.

Consider the following stochastic Hamiltonian
\begin{align*}
&\tilde{H}_1[m]dt=\frac12\int_\mr u(t,x)m(t,x)dxdt+ \int_\mr m(t,x)\sigma(x)\diamond dL(t),
\end{align*}
where $\sigma=\sigma(x)$ depends on $x\in\mr$,  $"\diamond"$ is the Marcus product 
and $L(t)$ is a L\'{e}vy motion  with pure jump, defined as
\begin{equation}\label{il1}
L(t)=\int_0^t\int_\mathbb{Z}z\tilde{\mathcal{N}}(dt,dz).
\end{equation}
Here $\mathbb{Z}=\{z: |z|\leq1\}$ is a locally compact Polish space, and $\tilde{\mathcal{N}}$ is the
compensated Poisson random measure.
Let $\mathcal{N}$ be a Poisson random measure on $[0,T]\times \mathbb{Z}$ with
a $\sigma$-finite intensity measure $\lambda_T\otimes\vartheta$,
where $ \lambda_T$ is the Lebesgue measure
on $[0,T]$ and $\vartheta$ is a $\sigma$-finite measure on $\mathbb{Z}$, such that
\begin{equation}\label{i5}
\int_{\m{Z}}e^{2\|\sigma_x\|_{L^\infty}|z|}\vartheta(dz)<\infty,\;\;
\int_{\m{Z}}z^2\vartheta(dz)<\infty.
\end{equation}
Then, for $A\in \mathcal{B}(\mathbb{Z})$ with $\vartheta(A)<\infty$,
\begin{equation*}
\tilde{\mathcal{N}}([0,t]\times A)=\mathcal{N}([0,t]\times A)-t\vartheta(A).
\end{equation*}
We derive the stochastic CH equation
\begin{align}\label{d2}
dm=&\mathcal{J}_1\frac{\delta \tilde{H}_1[m]}{\delta m}dt=\mathcal{J}_1(udt+\sigma \diamond dL(t))\nonumber\\
=&-(um_x+2mu_x+2ku_x)dt-(\sigma m_x+2m\sigma_x+2k\sigma_x)\diamond dL(t).
\end{align}
By the relation between the advective form $u$ and the moment $m$, applying $(1-\partial_x^2)^{-1}$ to both sides of \eqref{d2} to yield
\begin{align}
&du+(uu_x+P_x)dt+\zeta(u)\diamond dL(t)=0,\label{d6ge}\\
&(1-\p_x^2)P=u^{2}+\frac12u^{2}_x+2ku,\label{p}
\end{align}
where
\begin{align}\label{zeta}
\zeta(u)=3\sigma u_x-2\sigma_xu+(1-\p_x^2)^{-1}(2\sigma_xu-u\p_x^3\sigma +\sigma_{xx}u_x)+2k(1-\p_x^2)^{-1}\sigma_x.
\end{align}
The elliptic equation for $P$ can be solved to supply
\[
P=K*(u^{2}+\frac12u^{2}_x+2ku),\;K(x)=\frac12e^{-|x|}.
\]
Mathematically, as explained in Remark \ref{remark1}, the nonlocal part and the part $2u\sigma_x$ of the noise term in \eqref{zeta}
offers no new (essential) difficulties compared to $3\sigma u_x$. For the sake of clarity,
we will therefore focus on the following stochastic CH equation with gradient pure jump noise
\begin{align}
&du+(uu_x+P_x)dt+\sigma u_x\diamond dL(t)=0,\label{d6a}
\end{align}
where $P$ is given by \eqref{p}.
For fixed $z\in\m{Z}, w\in L^2$, let $\Phi(t,z, w)$ be
the solution of the following partial differential equation
\[
dy(t)=-z\sigma y_x(t)dt, \;\;y(0)=w.
\]
Then $\Phi(t,x,w)=w(x-\sigma zt)$.
Hence, the stochastic integration in equation \eqref{d6a} with Marcus form
can be written with $\Phi(1, z, u)$ as follows \cite{app09}
\begin{align}\label{d6}
du+(uu_x+P_x)dt=&\int_\m{Z}[u(t-, x-\sigma z)-u(t-,x)] \tilde{\mathcal{N}}(dt,dz)\nonumber\\&
+\int_\m{Z}[u(t, x-\sigma z)-u(t,x)+z\sigma u_x]\vartheta(dz)dt.
\end{align}

The literature
on stochastic partial differential equations driven by L\'{e}vy noise in the "Marcus" canonical
form is very limited and such work has recently been initiated by Brze\'{z}niak et al.
in \cite{brzeslaw19,brezniak19a}. The noise intensity $\sigma$ is a constant in \cite{ChLi}.
The operator in the Murcus integral in \cite{brzeslaw19,brezniak19a} is bounded, while it is
unbounded here. Moreover, in this form of noise, it can maintain the invariants $H_1(u)$ and $H_2(u)$.


\subsection{Our aims}

Our aim is to investigate the influence of random perturbations with the form given in Eq. \eqref{d6} to the smooth solitary wave of the CH equation.
More precisely, we study the stochastic CH equation
\begin{align}\label{d2-ch}
&du+(uu_x+P_x)dt+\epsilon\sigma u_x\diamond dL(t)=0,
\end{align}
where $P$ is given by \eqref{p} and $\epsilon>0$  is a small parameter.
The stochastic integration in equation \eqref{d6a} with Marcus form
can be written as follows \cite{app09}
\begin{align}\label{d2-cha}
du+(uu_x+P_x)dt=&\int_\m{Z}[u(t-, x-\eps\sigma z)-u(t-,x)] \tilde{\mathcal{N}}(dt,dz)\nonumber\\&
+\int_\m{Z}[u(t, x-\eps\sigma z)-u(t,x)+\epsilon z\sigma u_x]\vartheta(dz)dt.
\end{align}
We will use the collective coordinate approach to investigate the
influence of random perturbations on the propagation of deterministic standing waves (e.g. \cite{Bouard07,Bouard09,wad84}).
This approach consists in writing that the main part of the solution is given by a modulated soliton and in finding then the modulation equations
for the soliton parameters. The modulation theory, in general, provides an
approximate and constructive answer to questions on concerning the location of the standing wave
and the behavior of its phase for $t> 0.$
The random modulations of solitons of the stochastic Korteweg-de Vries equation and stochastic Schr\"{o}dinger equation under the influence of the
Brownian motion have been studied in \cite{Bouard07,Bouard09,wad84}.
As far as we know, it's the first paper to consider the influence of the pure jump noise to the solitions.

Let $\vp_{c_0}(x)$ with $c_0>0$ fixed be a smooth solitary wave solution of equation \eqref{d2-ch} with $\epsilon=0$.
Define the functional
\begin{align}\label{lya}
H_c(u)=cH_1(u)-H_2(u), u\in H^1,
\end{align}
where $H_1$ and $H_2$ are given in \eqref{i1} and \eqref{i1a} respectively.
Note $\vp_{c_0}$ is a critical point of $H_{c_0}$, that is
\begin{align}\label{i4}
H_{c_0}'(\vp_{c_0})={c_0}H_1'(\vp_{c_0})-H_2'(\vp_{c_0})=0,
\end{align}
where $H_1'$ and $H_2'$ are the Fr\'echet derivatives of $H_1$ and $H_2$ in $H^1(\mr)$ respectively. The linearized Hamiltonian operator $\mathcal{L}_c$
of $cH_1'+H_2'$ around $\vp$ is defined by
\begin{align}\label{i5}
\mathcal{L}_c=-\partial_x((2c-2\vp_c)\p_x)-6\vp_c+2\p_x^2\vp_c+2(c-2k).
\end{align}
Denote $u^\epsilon(t,x)$ be the solution of equation \eqref{d2-ch} with the initial value $u^\epsilon(0,x) = \vp_{c_0}(x)$.
 We prove that $u^\epsilon(t,x)$ can be decomposed into the sum of a randomly modulated solitary wave $\vp_{c^\eps(t)}(x^{\eps}(t))$ and
 a small remainder $\eps\ee(t)$, which is randomly modulated in its phase $x^\epsilon(t)$ and frequency $c^\epsilon(t)$.
Then, we study more precisely the behavior at order one in $\epsilon$ of the remaining term $\ee$ in the
preceding decomposition as $\epsilon$ goes to zero.

%
%

\subsection{The main results}

Now, we give the main results of the paper.
The first one is that the main part of the solution of equation \eqref{d2-ch} is a solitary wave, randomly modulated in its phase $x^\epsilon(t)$ and frequency $c^\epsilon(t)$ as follows.

\begin{theorem} [Stochastic modulated solitary wave]\label{tm2}

For $\epsilon>0,$ let $u^\epsilon(t,x)$ be the solution of equation \eqref{d2-ch} with
$u_0(x)=\vp_{c_0}(x)$ and $c_0>k$.
Then, there exists $\alpha_0>0$ such that for $0<\alpha\leq\alpha_0$, there is a stopping time $\tau_\alpha^\epsilon$ a.s.
and there are semi-martingale processes  $c^\epsilon(t)$ and $x^\epsilon(t)$ defined a.s. for $t\leq\tau_\alpha^\epsilon,$
with values respectively in $\mr^{+}$ and $\mr$. The solution $u^\epsilon(t,x)$ can be decomposed as
\[
u^\epsilon(t,x)=\varphi_{c^\epsilon(t)}(x-x^\epsilon(t))+\epsilon\ee(t,x-x^\epsilon(t)).
\]
Let $\eta^\epsilon(t,x)=\frac1\epsilon[u^\epsilon(t,x+x^\epsilon(t))
-\varphi_{c^\epsilon(t)}(x)]$. Then it satisfies the orthogonality conditions
\begin{align}\label{orth1}
(\eta^\epsilon, (1-\p_x^2)\varphi_{c_0})=0,\;\;\hbox{and}\; \;
(\eta^\epsilon, (1-\p_x^2)\p_x\varphi_{c_0})=0.
\end{align}
Moreover, for $t\leq \tau_\alpha^\epsilon,$
\[
\|\epsilon\eta^\epsilon(t)\|_{H^1}\leq \alpha,\;\;\hbox{and}\;\;|c^\epsilon(t)-c_0|\leq \alpha,\;\;a.s.
\]

In addition, for any $T>0$ and $\alpha\leq \alpha_0$,  there is a $\epsilon_0>0$, for $\epsilon<\epsilon_0,$
\begin{align}\label{exit}
\m{P}(\tau_\alpha^\epsilon\leq T)\leq C\frac{T}{\alpha^4}b(\epsilon),
\end{align}
where $b(\epsilon)=\int_{\m{Z}}((e^{\eps|z|\|\sigma_x\|_{L^\infty}}-1)^2+(e^{\frac32\eps|z|\|\sigma_x\|_{L^\infty}}-1)^2)\vartheta(dz).$
\end{theorem}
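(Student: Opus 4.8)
The plan is to construct the modulation parameters $c^\epsilon(t)$ and $x^\epsilon(t)$ via a deterministic implicit-function argument applied pathwise, and then to derive the bounds and the exit-time estimate using Itô's formula for the jump process together with the coercivity of the linearized operator $\mathcal{L}_{c_0}$ restricted to the orthogonal complement of the kernel directions. First I would set up the implicit function theorem: define the map $\mathbb{R}^+\times\mathbb{R}\times H^1\to\mathbb{R}^2$ by
\[
F(c,x,u)=\big((u(\cdot+x)-\varphi_c,(1-\p_x^2)\varphi_{c_0}),\ (u(\cdot+x)-\varphi_c,(1-\p_x^2)\p_x\varphi_{c_0})\big),
\]
check that $F(c_0,0,\varphi_{c_0})=0$, compute the $2\times2$ Jacobian in $(c,x)$ at that point, and verify it is invertible (this uses $c_0>k$, since $\frac{d}{dc}\varphi_c$ paired against $(1-\p_x^2)\varphi_{c_0}$ must be nonzero, and the odd/even parity of $\varphi_{c_0}$ and $\p_x\varphi_{c_0}$ decouples the system). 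This produces, for $u$ in a small $H^1$-ball around $\varphi_{c_0}$, unique $C^1$ functions $c=c(u)$, $x=x(u)$ satisfying the orthogonality conditions \eqref{orth1}. Applying this to $u=u^\epsilon(t)$ and defining $\tau_\alpha^\epsilon$ as the first exit time of $(\|\epsilon\eta^\epsilon(t)\|_{H^1},|c^\epsilon(t)-c_0|)$ from the box of size $\alpha$ gives the decomposition and the a.s. bounds for $t\le\tau_\alpha^\epsilon$ by construction; the semimartingale property of $c^\epsilon,x^\epsilon$ follows since $u^\epsilon$ is a semimartingale in $H^1$ and $c(\cdot),x(\cdot)$ are $C^1$, so one can apply the Itô–Marcus chain rule.

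Next I would derive the modulation equations. Inserting the ansatz $u^\epsilon(t,x)=\varphi_{c^\epsilon(t)}(x-x^\epsilon(t))+\epsilon\eta^\epsilon(t,x-x^\epsilon(t))$ into \eqref{d2-cha}, using that $\varphi_c$ solves the profile equation, and differentiating the two orthogonality relations \eqref{orth1} in time (via Itô for jumps), one obtains a system of stochastic differential equations for $dc^\epsilon$ and $dx^\epsilon$ whose drift and jump coefficients are explicit functionals of $\varphi_{c^\epsilon}$ and $\eta^\epsilon$. Schematically $dx^\epsilon=(c^\epsilon+\epsilon(\cdots))dt+\epsilon(\cdots)\diamond dL$ and $dc^\epsilon=\epsilon(\cdots)dt+\epsilon(\cdots)\diamond dL$, so that $c^\epsilon$ moves only at order $\epsilon$; the precise coefficients come from inverting the same $2\times2$ Jacobian matrix as above. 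I would not grind through these; I would just record them and note the bounds on the coefficients that hold while $t\le\tau_\alpha^\epsilon$ (uniform in $\epsilon$, polynomial in $\alpha$).

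Finally, the exit-time estimate \eqref{exit} is the main obstacle. The idea is to find a Lyapunov functional controlling both $\|\eta^\epsilon\|_{H^1}$ and $|c^\epsilon-c_0|$. The natural candidate is built from the modulated Hamiltonian: set
\[
\mathcal{H}^\epsilon(t)=\frac{1}{\epsilon^2}\Big(H_{c^\epsilon(t)}(u^\epsilon(t))-H_{c^\epsilon(t)}(\varphi_{c^\epsilon(t)})\Big)
\]
(or a variant with an extra term enforcing the constraint), and use the second-order Taylor expansion $H_{c^\epsilon}(\varphi_{c^\epsilon}+\epsilon\eta^\epsilon)-H_{c^\epsilon}(\varphi_{c^\epsilon})=\frac{\epsilon^2}{2}(\mathcal{L}_{c^\epsilon}\eta^\epsilon,\eta^\epsilon)+O(\epsilon^3\|\eta^\epsilon\|_{H^1}^3)$ together with the coercivity estimate $(\mathcal{L}_{c_0}\eta,\eta)\ge \kappa\|\eta\|_{H^1}^2$ for $\eta$ satisfying \eqref{orth1} (this is where the spectral/stability analysis of the CH soliton from \cite{constr02} enters). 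Then I would apply the Itô–Lévy formula to $\mathcal{H}^\epsilon(t)$: the drift terms arising from the deterministic CH flow cancel because $H_1,H_2$ are conserved, the drift terms from the modulation equations for $c^\epsilon$ are $O(\epsilon)$ and thus negligible on $[0,T]$, and the genuinely new contribution is the compensated-jump martingale part plus the second-order jump correction. The jump correction is precisely where $b(\epsilon)$ appears: a jump of size $z$ shifts $u$ by $\epsilon\sigma z$ in space, and since $H_1,H_2$ are \emph{not} shift-covariant with respect to the modulated frame (only the true soliton profile sits at a critical point), the quadratic remainder from the jump produces a term of order $\int_{\mathbb{Z}}(e^{c\epsilon|z|\|\sigma_x\|_{L^\infty}}-1)^2\vartheta(dz)$ per unit time — the exponential factors come from bounding $H^1$-norms of spatially shifted functions using $|\varphi_c'|\le\varphi_c$ and the exponential decay of $\varphi_c$. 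Collecting: $\mathbb{E}[\mathcal{H}^\epsilon(t\wedge\tau_\alpha^\epsilon)]\le C t\, b(\epsilon)/\epsilon^2 + \text{(martingale, mean zero)} + o(1)$, and by coercivity $\kappa\|\eta^\epsilon(t\wedge\tau_\alpha^\epsilon)\|_{H^1}^2+|c^\epsilon-c_0|^2\lesssim \mathcal{H}^\epsilon$, so a Chebyshev inequality at the exit level $\alpha^2/\epsilon^2$ gives $\mathbb{P}(\tau_\alpha^\epsilon\le T)\le C T b(\epsilon)/\alpha^4$. The delicate points I expect to fight with are: (i) making the coercivity of $\mathcal{L}_{c_0}$ on the two-codimensional subspace rigorous and transferring it to $\mathcal{L}_{c^\epsilon}$ for $c^\epsilon$ near $c_0$; (ii) controlling the martingale bracket so that optional stopping is legitimate, which needs the second moment assumption \eqref{i5} on $\vartheta$; and (iii) bookkeeping the $O(\epsilon^3)$ and $O(\epsilon)$ error terms so they are absorbed, which requires the a.s. bounds $\|\epsilon\eta^\epsilon\|_{H^1}\le\alpha$, $|c^\epsilon-c_0|\le\alpha$ valid up to $\tau_\alpha^\epsilon$ and the smallness of $\epsilon$ relative to $\alpha$.
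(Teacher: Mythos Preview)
Your implicit-function construction and the semimartingale argument match the paper's essentially verbatim, including the parity observation that diagonalizes the Jacobian. The exit-time argument, however, diverges from the paper in a way that leaves two concrete gaps.

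First, the paper does \emph{not} use the modulated Lyapunov $H_{c^\epsilon}(u^\epsilon)-H_{c^\epsilon}(\varphi_{c^\epsilon})$; it uses the \emph{fixed} functional $H_{c_0}(u^\epsilon)-H_{c_0}(\varphi_{c^\epsilon})$. The advantage is that It\^o's formula for $H_{c_0}(u^\epsilon)$ involves only the equation for $u^\epsilon$, so the modulation equations are never needed for Theorem~\ref{tm2} (they are derived only afterwards, in Section~3). Your version requires $dc^\epsilon$ explicitly, and the drift coefficient $a^\epsilon$ there scales like $\|\eta^\epsilon\|_{H^1}^2$ (see the paper's Lemma~3.3), which is \emph{not} uniformly bounded before you have proved the exit-time estimate---so there is a circularity you would have to break. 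The fixed-$c_0$ choice controls only $\|\epsilon\eta^\epsilon\|_{H^1}^2-C|c^\epsilon-c_0|^2$, so the paper obtains $|c^\epsilon-c_0|$ separately by expanding $\|u^\epsilon\|_{H^1}^2=\|\epsilon\eta^\epsilon+\varphi_{c^\epsilon}\|_{H^1}^2$, using the orthogonality against $(1-\partial_x^2)\varphi_{c_0}$ to kill the cross term up to $O(\alpha)|c^\epsilon-c_0|$, and comparing with the It\^o evolution of $H_1(u^\epsilon)$; this yields $|c^\epsilon-c_0|^2$ bounded by $\|\epsilon\eta^\epsilon\|_{H^1}^4$ plus squares of stochastic integrals. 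Your claim that $\mathcal{H}^\epsilon$ controls $|c^\epsilon-c_0|^2$ as well is not justified and is where you would get stuck.

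Second, two smaller points. The exponential factors in $b(\epsilon)$ do not come from soliton decay or $|\varphi_c'|\le\varphi_c$; they come from a Gronwall estimate on the Marcus flow map, $\|\Phi(1,z,u)\|_{H^1}^2\le e^{\epsilon|z|\|\sigma_x\|_{L^\infty}}\|u\|_{H^1}^2$, hence $|H_i(\Phi(1,z,u))-H_i(u)|\le C(e^{c\epsilon|z|\|\sigma_x\|_{L^\infty}}-1)(1+\|u\|_{H^1}^3)$ (the paper's Lemma~2.3). And a straight Chebyshev from $\mathbb{E}[\mathcal{H}^\epsilon]\le CTb(\epsilon)/\epsilon^2$ would give $\alpha^{-2}$, not $\alpha^{-4}$: the paper instead multiplies the \emph{pathwise} inequality by $1_{\Omega_1^{T,\epsilon,\alpha}}$, takes expectations, and applies BDG/Cauchy--Schwarz to the martingale term to get $\alpha^2\mathbb{P}(\Omega_1)\le C\sqrt{T}\,b(\epsilon)^{1/2}\mathbb{P}(\Omega_1)^{1/2}$, which squares to the stated $\alpha^{-4}$.
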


The second one is the convergence of $\ee$ as $\epsilon$ goes to zero.

\begin{theorem}\label{th3}
Let $\ee, c^\epsilon$ and $x^\epsilon$ be given by Theorem \ref{tm2}, with $\alpha\leq \alpha_0$ fixed. Then, for any $T>0$, the process $\ee$
converges in probability in the space $\m{D}([0,T\wedge\tau_\alpha^\epsilon]; L^2)$, as $\epsilon$ goes to 0, to a process $\eta$ satisfying the linear eqaution
\begin{align}\label{eta}
 d\eta =&\frac12(1-\p_x^2)^{-1}\p_x\mathcal{L}_{c_0}\eta dt +( y(t) \p_x\vp_{c_0}-a(t)\p_c\vp_{c_0})dt\nonumber\\&
+(\sigma(x)\p_x\vp_{c_0}+\p_x\vp_{c_0}  \mu(t)-\p_c\vp_{c_0}b(t))\diamond dL(t),
\end{align}
with $\eta(0)=0$, where
\begin{align}
    y (t)=& -\frac12(\p_x\vp_{c_0}, (1-\p_x^2)\p_x\vp_{c_0})^{-1}(\p_x\mathcal{L}_{c_0}\eta, \p_x\vp_{c_0}),\label{eta1} \\
    \mu (t)=& -(\p_x\vp_{c_0}, (1-\p_x^2)\p_x\vp_{c_0})^{-1}(\sigma\p_x\vp_{c_0}, (1-\p_x^2)\p_x\vp_{c_0}),\\
    a (t)=&-\frac12(\p_c\vp_{c_0}, (1-\p_x^2)\vp_{c_0})^{-1} (\p_x\mathcal{L}_{c_0}\eta, \vp_{c_0}),\\
    b(t)=& (\p_c\vp_{c_0}, (1-\p_x^2)\vp_{c_0})^{-1}(\sigma\p_x\vp_{c_0}, (1-\p_x^2)\vp_{c_0}).\label{eta1x}
\end{align}

Moreover, the modulation parameters may be written  as
\begin{align}
&dx^\epsilon(t)=c^\epsilon(t) dt+\epsilon  y^\epsilon(t) dt+\epsilon \mu^\epsilon(t)\diamond dL(t),\label{xe1}\\
&dc^\epsilon(t)=\epsilon  a^\epsilon(t) dt+\epsilon b^\epsilon(t)\diamond dL(t)\label{ce1},
\end{align}
for some adapted processes $ y^\epsilon, \mu^\epsilon, a^\epsilon, b^\epsilon$ with values in $\mr$ satisfying:
$( y^\epsilon, a^\epsilon, b^\epsilon, \mu^\epsilon)\to ( y, a, b, \mu)$  in probability in
$\m{D}([0,T])$ as $\epsilon\to0$.
\end{theorem}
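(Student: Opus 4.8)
We outline the strategy. The plan is to substitute the decomposition of Theorem~\ref{tm2} into \eqref{d2-cha}, read off the modulation equations \eqref{xe1}--\eqref{ce1} from the orthogonality conditions \eqref{orth1}, upgrade the a priori information of Theorem~\ref{tm2} to a bound on $\ee$ uniform in $\eps$, and then close a Gronwall estimate for $\ee-\eta$. Concretely, I would write $v^\eps(t,x):=u^\eps(t,x+x^\eps(t))=\vp_{c^\eps(t)}(x)+\eps\ee(t,x)$ and apply the Marcus--It\^o formula to this composition: from \eqref{d2-cha}, $\eps\,d\ee=dv^\eps-d\vp_{c^\eps(t)}$, where $dv^\eps$ carries the transported drift $-(v^\eps v^\eps_x+P^{v^\eps}_x)\,dt+v^\eps_x\,dx^\eps$ together with the jump and compensator terms of \eqref{d2-cha} written in the moving frame, while $d\vp_{c^\eps(t)}=\p_c\vp_{c^\eps}\,dc^\eps$ plus quadratic--variation corrections. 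Cancelling the $O(1)$ part with the travelling--wave identity $\vp_c\vp_{cx}+\p_x\big(K*(\vp_c^2+\tfrac12\vp_{cx}^2+2k\vp_c)\big)=c\vp_{cx}$, dividing by $\eps$, and using the algebraic identity $\tfrac12(1-\p_x^2)^{-1}\p_x\mathcal L_c=\mathcal A_c+c\p_x$ --- where $\mathcal A_c$ is the linearisation of $u\mapsto-(uu_x+P_x)$ about $\vp_c$, an identity one checks from \eqref{p} and the definition of $\mathcal L_c$ --- one arrives at
\[
d\ee=\tfrac12(1-\p_x^2)^{-1}\p_x\mathcal L_{c^\eps}\ee\,dt+\tfrac1\eps\,\p_x\vp_{c^\eps}\,\big(dx^\eps-c^\eps\,dt\big)-\tfrac1\eps\,\p_c\vp_{c^\eps}\,dc^\eps+R^\eps,
\]
where $R^\eps=R^\eps_1\,dt+R^\eps_2\diamond dL$ collects the $O(\eps)$ nonlinearities $\eps\big(\ee\ee_x+\p_x(1-\p_x^2)^{-1}(\ee^2+\tfrac12\ee_x^2)\big)$, the remaining $O(\eps)+O(|c^\eps-c_0|)$ corrections, and the leading--order $\sigma$--driven forcing of \eqref{eta}. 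Taking the It\^o differential of the two identities in \eqref{orth1} and inserting this equation produces a $2\times2$ linear system for the drift and for the jump coefficient of $(dx^\eps,dc^\eps)$; by evenness of $\vp_{c_0}$ the leading--order matrix is anti-diagonal, with the two nonzero entries $\|\p_x\vp_{c_0}\|_{H^1}^2>0$ and $-\p_c H_1(\vp_c)\big|_{c_0}$, the latter nonzero precisely under the Grillakis--Shatah--Strauss nondegeneracy condition, which holds for $c_0>k$. Inverting for $\alpha$ small yields \eqref{xe1}--\eqref{ce1}, and letting $\eps\to0$ (so that $c^\eps\to c_0$, $\vp_{c^\eps}\to\vp_{c_0}$, $\p_c\vp_{c^\eps}\to\p_c\vp_{c_0}$, and the $O(\eps)$ parts drop) the coefficients converge to \eqref{eta1}--\eqref{eta1x}; substituting the modulation equations back reduces the display, in the limit, to \eqref{eta}, a linear equation with a first--order (hence $C_0$--group--generating) drift operator and smooth deterministic coefficients, so it is globally well posed and $\eta$ has moments of all orders.

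The core difficulty is a bound on $\ee$ uniform in $\eps$: Theorem~\ref{tm2} only gives $\eps\|\ee\|_{H^1}\le\alpha$, under which the nonlinear term above is merely $O(\alpha^2/\eps)$, and a naive $L^2$ energy estimate fails because of the derivative loss in the transport nonlinearity and in the Marcus noise. The remedy is a modulated--energy estimate. The linear drift $\tfrac12(1-\p_x^2)^{-1}\p_x\mathcal L_{c_0}$ conserves the quadratic form $\langle\mathcal L_{c_0}\ee,\ee\rangle$, since $\langle\mathcal L_{c_0}w,(1-\p_x^2)^{-1}\p_x\mathcal L_{c_0}w\rangle=\int g\,\p_x g\,dx=0$ with $g=(1-\p_x^2)^{-1/2}\mathcal L_{c_0}w$; the orthogonality constraints \eqref{orth1} are preserved by this flow; and the orbital--stability coercivity of \cite{constr02} yields $\nu>0$ with $\langle\mathcal L_{c_0}w,w\rangle\ge\nu\|w\|_{H^1}^2$ on the subspace cut out by \eqref{orth1}. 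Combining this with the conservation of $H_1(u^\eps)$ and $H_2(u^\eps)$ under the Marcus noise, the bound $|c^\eps-c_0|\le\alpha$, the exponential integrability of $\vartheta$, and the $H^s$ ($s>3/2$) regularity of $u^\eps$ (needed to make the cubic remainders meaningful), an It\^o estimate for $\langle\mathcal L_{c^\eps}\ee,\ee\rangle$ together with a bootstrap --- first localising at a stopping time where $\|\ee\|_{H^1}$ is bounded, then removing the localisation by this very estimate and \eqref{exit} --- gives $\sup_{t\le T\wedge\tau_\alpha^\eps}\me\|\ee(t)\|_{H^1}^2\le C(T)$ uniformly in $\eps$; in particular the $O(\eps)$ remainders in $R^\eps$ are indeed $O(\eps)$ in probability.

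Finally, $\rho^\eps:=\ee-\eta$ solves $d\rho^\eps=\tfrac12(1-\p_x^2)^{-1}\p_x\mathcal L_{c_0}\rho^\eps\,dt+F^\eps\,dt+G^\eps\diamond dL$ with $\rho^\eps(0)=0$, where $F^\eps,G^\eps$ collect the differences $\mathcal L_{c^\eps}-\mathcal L_{c_0}$, $\vp_{c^\eps}-\vp_{c_0}$, $\p_c\vp_{c^\eps}-\p_c\vp_{c_0}$ (all $O(|c^\eps-c_0|)\to0$ in probability), the differences of modulation coefficients (linear functionals of $\rho^\eps$ up to $O(|c^\eps-c_0|)$), and the $O(\eps)$ nonlinearities. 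An $L^2$ estimate closes: after an integration by parts the leading part $(2c_0-2\vp_{c_0})\p_x\rho^\eps$ pairs into $\int\vp_{c_0,x}(\rho^\eps)^2\,dx$, so $\langle\tfrac12(1-\p_x^2)^{-1}\p_x\mathcal L_{c_0}\rho^\eps,\rho^\eps\rangle\le C\|\rho^\eps\|_{L^2}^2$; the nonlinear contributions, paired against $\ee-\eta$, are handled by the cancellation $\int\ee^2\ee_x\,dx=0$, the one--dimensional inequality $\|f\|_{L^4}^2\le C\|f\|_{L^2}^{3/2}\|f\|_{H^1}^{1/2}$, and $\eps\|\ee\|_{H^1}\le\alpha$, which turn them into $O(\eps^{1/2})\big(\|\rho^\eps\|_{L^2}^2+1+\|\eta\|_{H^1}^4\big)$; and the jump terms are put in It\^o form and estimated with the Burkholder--Davis--Gundy inequality and the integrability of $\vartheta$. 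Taking expectations, inserting the uniform bound and the moments of $\eta$, and applying Gronwall give $\me\sup_{t\le T\wedge\tau_\alpha^\eps}\|\rho^\eps(t)\|_{L^2}^2\to0$, hence $\ee\to\eta$ in probability in $\m{D}([0,T\wedge\tau_\alpha^\eps];L^2)$; and $( y^\eps,a^\eps,b^\eps,\mu^\eps)\to( y,a,b,\mu)$ in probability in $\m{D}([0,T])$, these being continuous linear functionals of $(\ee,c^\eps)$ with $\ee\to\eta$ and $c^\eps\to c_0$. I expect the uniform a priori bound --- reconciling the weak tube condition of Theorem~\ref{tm2} with the derivative--losing nonlinear and noise terms --- to be the main obstacle; once it is in hand, the rest is lengthy but routine.
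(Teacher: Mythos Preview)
Your overall architecture---derive the equation for $\ee$ from the decomposition, read off the modulation equations \eqref{xe1}--\eqref{ce1} from the orthogonality constraints, obtain an $\eps$-uniform bound on $\ee$, then close a Gronwall estimate on $\|\ee-\eta\|_{L^2}$---is exactly the paper's (Sections~\ref{s3}--\ref{s4}). The one substantive difference is the method for the uniform bound.

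You argue that a direct energy estimate fails by derivative loss and propose instead to control the Lyapunov functional $\langle\mathcal L_{c^\eps}\ee,\ee\rangle$, using its conservation under the linear flow together with the coercivity of $\mathcal L_{c_0}$ on the subspace defined by \eqref{orth1}. The paper takes a more elementary route: a direct It\^o estimate on $\|\ee\|_{H^1}^2$ (Lemma~\ref{lemma51}). The point you may have overlooked is that in $H^1$ (as opposed to $L^2$) the feared derivative loss does not occur. Pairing $\tfrac12(1-\p_x^2)^{-1}\p_x\mathcal L_{c^\eps}\ee$ against $(1-\p_x^2)\ee$ gives $\tfrac12(\p_x\mathcal L_{c^\eps}\ee,\ee)$, and after integration by parts the top-order terms cancel, leaving only $C\|\ee\|_{H^1}^2$; and the nonlinear remainder $\eps y^\eps\ee_x+\eps f(\ee)$ pairs to \emph{zero} against $(1-\p_x^2)\ee$---this is nothing but the $H^1$-conservation of the CH flow (the term $J_3$ in the paper vanishes identically). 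No modulated energy, no coercivity, and no bootstrap with stopping times is needed. Your approach via $\langle\mathcal L_{c^\eps}\ee,\ee\rangle$ would also work, but the paper's is shorter and delivers the higher moment $\me\sup_{t\le T\wedge\tau_\alpha^\eps}\|\ee\|_{H^1}^4\le C$ with no extra effort, which is then used in the difference estimates (Lemma~\ref{lemma52} and the bound on $K_5$). The remainder of your plan---the $L^2$ Gronwall on $\rho^\eps=\ee-\eta$, with the linear drift handled by integration by parts and the forcing split as $(\mathcal L_{c^\eps}-\mathcal L_{c_0})\ee$, $\vp_{c^\eps}-\vp_{c_0}$, modulation-parameter differences, and $O(\eps)$ nonlinearities---matches the paper's proof of Theorem~\ref{th3} essentially term by term.
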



This paper is organized as follows.
In Section \ref{s2}, we justify the existence of the modulation parameters and give an estimate on the time up to
which the modulation procedure is available. In Section \ref{s3},  we give the equations of the modulation parameters.
In Section \ref{s4}, we show the convergence of the remainder term as $\epsilon$ to zero.

\smallskip

\section{Modulation and estimates on the exit time}\label{s2}

In this section, we prove the existence of modulation parameters and the estimate on the exit
time.
%
%
First, we present the following It\^o formula.

\begin{lemma}[It\^o formula, \cite{brzeslaw19}, Theorem B.2]\label{ito2}
Assume that $U$ is a Hilbert space. Let $Y$ be a $U$-valued process given by
\[
Y(t)=Y_0+\int_0^ta(Y(s))ds+\int_0^t\int_{\m{Z}}f(Y(s-))\diamond dL(s),\;\;t\geq0,
\]
where $a, f: U\to U$ are  $\mathcal{F}_t$-adapted random mappings.
Let $V$ be a separable Hilbert space. Let $\phi: U\to V$ be a function of class $C^1$
such that the first derivative $\phi':U\to L(U; V)$ is $(p-1)$-H\"older continuous. Then for every $t > 0$, we have $\m{P}$-a.s.
\begin{align*}
\phi(Y(t))=&\phi(Y_0)+\int_0^t\phi'(Y(s))(a(Y(s)))ds+\int_0^t\int_{\m{Z}}
[\phi(\Phi(1,z,Y(s-)))-\phi(Y(s-))]\tilde{\mathcal{N}}(ds,dz)\\
&+\int_0^t\int_{\m{Z}}
[\phi(\Phi(1,z,Y(s)))-\phi(Y(s))-z\phi'(Y(s))f(Y(s))]\vartheta(dz)ds,
\end{align*}
where $y(t):=\Phi(t,z,y_0)$ solves
\[
\frac{dy}{dt}=zf(y),
\]
with initial condition $y(0)=y_0.$

\end{lemma}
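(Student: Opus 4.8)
The plan is to exhibit $Y$ as a Hilbert-space-valued semimartingale with \emph{no} continuous (Gaussian) martingale part, apply the change-of-variables formula for such purely discontinuous semimartingales, and then regroup the resulting terms into the Marcus form. The first step is to unfold the Marcus integral: by the definition of the canonical integral against the compensated Poisson measure, $Y$ solves the It\^o-type equation
\begin{align*}
Y(t)=Y_0+\int_0^t a(Y(s))\,ds&+\int_0^t\int_{\m{Z}}[\Phi(1,z,Y(s-))-Y(s-)]\tilde{\mathcal{N}}(ds,dz)\\&+\int_0^t\int_{\m{Z}}[\Phi(1,z,Y(s))-Y(s)-zf(Y(s))]\vartheta(dz)\,ds,
\end{align*}
which displays $Y$ as a finite-variation continuous part plus a purely discontinuous martingale. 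Because $L$ is pure jump there is no Brownian component, so the change-of-variables formula carries no second-order $\phi''$ term; this is exactly why $\phi\in C^1$ with $(p-1)$-H\"older derivative suffices in place of the usual $C^2$ hypothesis.

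Second, I would establish the formula first under finite jump activity and then pass to the limit. Fix $\delta>0$ and let $Y^\delta$ be the process obtained by removing the small-jump martingale part (restricting $\tilde{\mathcal{N}}$ to $\{|z|>\delta\}$) while retaining the full $\vartheta$-compensator. Since the restricted intensity $\vartheta(\{|z|>\delta\})$ is finite, $Y^\delta$ is piecewise deterministic: on any bounded interval it has a.s. only finitely many jumps, between which it solves an ODE given by the continuous drift, and at a jump of mark $z$ it moves along the Marcus flow from $Y^\delta(s-)$ to $\Phi(1,z,Y^\delta(s-))$. On each deterministic piece the ordinary $C^1$ chain rule gives $d\phi(Y^\delta)=\phi'(Y^\delta)\,dY^\delta$, and across a jump the increment equals
\[
\phi(\Phi(1,z,Y^\delta(s-)))-\phi(Y^\delta(s-))=\int_0^1\phi'(\Phi(r,z,Y^\delta(s-)))\,zf(\Phi(r,z,Y^\delta(s-)))\,dr,
\]
by the fundamental theorem of calculus along the flow $r\mapsto\Phi(r,z,\cdot)$. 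This identity is the structural reason that Marcus calculus obeys the classical chain rule; summing it over the finitely many jumps produces the formula at truncation level $\delta$.

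Third, I would regroup and let $\delta\to0$. Writing $\mathcal{N}=\tilde{\mathcal{N}}+\vartheta\,ds$, the crucial point is that the $\vartheta$-compensator carried in the drift of $Y^\delta$ exactly cancels the linear-in-$z$ part of the jump contributions, so that no individually divergent $\int\int[\phi(\Phi)-\phi(Y)]\vartheta\,ds$ term ever appears. What survives splits into two convergent pieces. The compensated integral $\int_0^t\int_{\{|z|>\delta\}}[\phi(\Phi(1,z,Y))-\phi(Y)]\tilde{\mathcal{N}}(ds,dz)$ converges in $L^2$ to its counterpart over all of $\m{Z}$: by the It\^o isometry this rests on $\|\phi(\Phi(1,z,Y))-\phi(Y)\|_V\lesssim\|\Phi(1,z,Y)-Y\|_U\lesssim|z|$ together with $\int_{\m{Z}}z^2\,\vartheta(dz)<\infty$. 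The remaining compensator integral converges absolutely, since $\Phi(1,z,Y)-Y=zf(Y)+O(|z|^2)$ and the $(p-1)$-H\"older continuity of $\phi'$ yield $\|\phi(\Phi(1,z,Y))-\phi(Y)-z\phi'(Y)f(Y)\|_V\lesssim|z|^{2}+|z|^{p}$. Collecting the four surviving terms, and using $Y^\delta\to Y$, gives exactly the asserted identity.

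The main obstacle is the low-regularity, infinite-dimensional bookkeeping in the limit $\delta\to0$: one must pass to the limit simultaneously for the jump-martingale part (in $L^2$, via the isometry) and for the compensator part (absolutely, in $L^1$), and control the jump remainder with only a H\"older-continuous, not $C^2$, derivative. In particular one must verify that the remainder is $\vartheta$-integrable near $z=0$; this is automatic when $\phi'$ is Lipschitz ($p=2$), which is the case in all of our applications ($\phi$ being an $H^1$- or $L^2$-norm, a scalar product, or the Hamiltonians). A further subtlety specific to our setting is that $f$ is \emph{unbounded} (here $f(u)=\sigma u_x$ involves a spatial derivative): one must check that $\Phi(1,z,Y)$ stays in the relevant space and that $\|\Phi(1,z,Y)-Y\|_U\lesssim|z|$ holds uniformly, which follows from the explicit translation form $\Phi(1,z,w)=w(\cdot-\sigma z)$ together with the exponential integrability $\int_{\m{Z}}e^{2\|\sigma_x\|_{L^\infty}|z|}\vartheta(dz)<\infty$. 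Any residual integrability gaps are removed by localizing with a sequence of stopping times that increase to the time horizon.
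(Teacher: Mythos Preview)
The paper does not supply its own proof of this lemma: the statement is quoted verbatim as Theorem~B.2 of \cite{brzeslaw19} and is used as a black box throughout, so there is no argument in the paper for your proposal to be compared against. Your sketch is a reasonable outline of the standard proof strategy for a Marcus--It\^o formula (unfold the canonical integral into It\^o form, truncate small jumps to reduce to a piecewise-deterministic process, apply the ordinary chain rule between jumps and the fundamental theorem of calculus along the flow at jumps, then remove the truncation via the isometry for the martingale part and an $O(|z|^{p})$ remainder bound for the compensator), and it correctly identifies the two points specific to this paper---the unbounded $f(u)=\sigma u_x$ and the reliance on $\int_{\m{Z}}e^{2\|\sigma_x\|_{L^\infty}|z|}\vartheta(dz)<\infty$---but if you want to verify that the details go through you should consult \cite{brzeslaw19} directly.
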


The following lemma gives the evolution of $H_1$ and $H_2$ by \eqref{d2-ch}.

\begin{lemma}\label{lm21}
Let $u^\epsilon(t,x)$ be the solution of equation \eqref{d2-ch}.
Then, for $H_1(u^\epsilon), H_2(u^\epsilon)$ given in \eqref{i1} and \eqref{i1a}, we have
\begin{align}
H_1(u^\epsilon)=&\|\varphi_{c_0}\|_{H^1}^2+\int_0^t\int_{\m{Z}}[H_1(\Phi(1,z,u(s-)))-H_1(u^\epsilon(s-,x))]\tilde{\mathcal{N}}(ds,dz)\nonumber\\&
+\int_0^t\int_{\m{Z}}[H_1(\Phi(1,z,u(s-)))-H_1(u^\epsilon(s-,x))+\epsilon z(\sigma_x, u^{\epsilon2}-u^{\epsilon2}_x)]\vartheta(dz)ds,\label{m1xq}\\
H_2(u^\epsilon)=&H_2(\varphi_{c_0})+\int_0^t\int_{\m{Z}}[H_2(\Phi(1,z,u(s-)))-H_2(u^\epsilon(s-,x))]\tilde{\mathcal{N}}(ds,dz)\nonumber\\
&+\int_0^t\int_{\m{Z}}[H_2(\Phi(1,z,u(s)))-H_2(u^\epsilon(s-,x))-\epsilon zH_2'(u^\epsilon)\sigma u^\epsilon_x]\vartheta(dz)ds,\label{m2a}
\end{align}
where $\Phi(1,z,u(s))=u(s,x-\eps\sigma z), H_1'(u^\epsilon)=u^\epsilon-u^\epsilon_{xx}$ and
\[
H_2'(u^\epsilon)=3u^{\epsilon 2}-u_x^{\epsilon2}-2u^\epsilon u^\epsilon_{xx}+4ku^\epsilon.
\]

\begin{proof}
Since the initial value $\vp_c$ is smooth, we can use It\^o formula to $H_1(u^\epsilon)$ and $H_2(u^\epsilon)$.
Since $H_1(u^\epsilon)$ and $H_2(u^\epsilon)$ is the invariants of the CH equation \eqref{11a},
applying It\^o formula Lemma \ref{ito2} to $H_1(u^\epsilon)$ and $H_2(u^\epsilon)$, we have
\eqref{m1xq} and \eqref{m2a}.
\end{proof}

\end{lemma}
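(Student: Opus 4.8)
The plan is to apply the It\^o formula of Lemma~\ref{ito2} to $\phi(u^\epsilon(t))$, first with $\phi=H_1$ and then with $\phi=H_2$. Reading equation \eqref{d2-ch} in the abstract form of Lemma~\ref{ito2}, one takes the drift $a(u)=-(uu_x+P_x)$ and the Marcus coefficient $f(u)=-\epsilon\sigma\p_x u$, so that (as recorded just before \eqref{d6}) the flow map is $\Phi(1,z,u)=u(\cdot-\epsilon\sigma z)$. Before invoking Lemma~\ref{ito2} I would check that its hypotheses are met: since the initial datum $\vp_{c_0}$ is smooth, the local well-posedness theory in $H^s$ with $s>3/2$ (see \cite{chgao17,chendg}) keeps $u^\epsilon(t)$ in a fixed such space on its interval of existence — the transport-type jumps act on the profile by the near-identity changes of variables $\Phi(1,z,\cdot)$ and so preserve this regularity — and on that space $H_1$ is a bounded quadratic and $H_2$ a bounded cubic functional, hence both are of class $C^1$ with locally Lipschitz, a fortiori $(p-1)$-H\"older, Fr\'echet derivatives valued in the relevant dual space, exactly as Lemma~\ref{ito2} requires.

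The crux is the Lebesgue (drift) integral $\int_0^t\phi'(u^\epsilon(s))(a(u^\epsilon(s)))\,ds$, which I claim vanishes identically. Indeed $a(u)=-(uu_x+P_x)$ is precisely the right-hand side of the deterministic Camassa--Holm equation, that is \eqref{d6a} with $\sigma\equiv0$, along whose flow $H_1$ and $H_2$ are conserved; equivalently, one verifies by integration by parts that $(H_1'(u),\,uu_x+P_x)=0$ and $(H_2'(u),\,uu_x+P_x)=0$ for $u\in H^s$, $s>3/2$ — the classical conservation-law computation for \eqref{11a}, reflecting its Hamiltonian structure \eqref{i2}. Hence the first integral in Lemma~\ref{ito2} drops out for both choices of $\phi$, which is what leaves the bare initial values $\|\vp_{c_0}\|_{H^1}^2$ and $H_2(\vp_{c_0})$ in \eqref{m1xq}--\eqref{m2a}.

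It then remains to write the two jump integrals explicitly. The $\tilde{\mathcal{N}}$-integrand is already in final form, namely $\phi(\Phi(1,z,u^\epsilon(s-)))-\phi(u^\epsilon(s-))=H_i(u^\epsilon(s-,\cdot-\epsilon\sigma z))-H_i(u^\epsilon(s-))$. In the compensator one must evaluate the correction $z\,\phi'(u^\epsilon)f(u^\epsilon)=-\epsilon z\,(H_i'(u^\epsilon),\,\sigma\p_x u^\epsilon)$: for $i=1$, $H_1'(u)=u-u_{xx}$ and one integration by parts turns $(u-u_{xx},\,\sigma u_x)$ into a multiple of $(\sigma_x,\,u^2-u_x^2)$, producing \eqref{m1xq}; for $i=2$ one simply keeps $zH_2'(u^\epsilon)\sigma u^\epsilon_x$ with $H_2'$ as given in the statement, producing \eqref{m2a}. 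The argument is thus essentially a substitution into Lemma~\ref{ito2}; the only points that call for care are verifying that $u^\epsilon$ stays in a space on which $H_1$, $H_2$ and their derivatives are controlled (so that Lemma~\ref{ito2} genuinely applies, and so that the drift term is killed by the exact conservation identity above) and the bookkeeping of the numerical constant and normalization in the $H_1$-correction term.
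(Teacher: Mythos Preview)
Your proposal is correct and follows exactly the paper's approach: apply the It\^o formula of Lemma~\ref{ito2} and use that $H_1$ and $H_2$ are invariants of the deterministic Camassa--Holm flow so that the drift integral vanishes. The paper's own proof is extremely terse (two sentences), and your write-up simply spells out the identification of $a$, $f$, $\Phi$, the regularity check, and the integration by parts for the $H_1$-compensator term that the paper leaves implicit.
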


\begin{lemma}\label{lm21xx}
Let $u^\epsilon(t,x)$ be the solution of equation \eqref{d2-ch} with $u^\epsilon(0,x)=\varphi_{c_0}(x)$.
Then,
\begin{align}
H_1(\Phi(1,z,u^\epsilon))-H_1(u^\epsilon)\leq& (e^{|z|\|\sigma_x\|_{L^\infty}}-1)\|u^\epsilon\|_{H^1}^2, \label{lmh12-1d}\\
H_2(\Phi(1,z,u^\epsilon))-H_2(u^\epsilon)\leq& C(e^{|z|\|\sigma_x\|_{L^\infty}}-1)\|u^\epsilon\|_{H^1}^2
+C(e^{\frac32|z|\|\sigma_x\|_{L^\infty}}-1)\|u^\epsilon\|_{H^1}^3,\label{lmh12-1ds}
\end{align}
where $\Phi(1,z,u(s))=u(s,x-\eps\sigma z).$ Moreover,
\begin{align}\label{lmh12x-1ds}
\me\sup_{t\in[0,T]}\|u^\epsilon\|_{H^1}^2\leq C  \|\varphi_{c_0}(x)\|_{H^1}^2.
\end{align}

\begin{proof}
Fixed $z\in\m{Z}, s\in\mr$, let $y(t,x)=\Phi(t,z,u(s,x))$ be the solution of the following equation
\[
dy(t,x)=-z\sigma(x) y_x(t,x)dt,\;\; y(0,x)=u(s,x).
\]
Then $y(t,x)=u(s, x-\sigma zt)$ and
\begin{align}\label{yl2}
\|y(t,x)\|_{H^1}^2\leq \|u(s,x)\|_{H^1}^2e^{|z|\|\sigma_x\|_{L^\infty}t}.
\end{align}
By the mean value theorem and $H_1'(y)=y-y_{xx}$,
\begin{align}\label{lmh12-1}
&H_1(\Phi(1,z,u))-H_1(u)=H_1(y(1,x))-H_1(y(0,x))\nonumber\\
=&\int_0^1\frac{d}{dr}[H_1\circ y](r)dr
=\int_0^1(H_1\circ y)'(r)dr\nonumber\\
=&\int_0^1H'_1(y(r)) y'(r)dr=-z\int_0^1H'_1(y(r)) \sigma y_x(r)dr\nonumber\\
=&-z\int_0^1(y-y_{xx}, \sigma y_x)(r)dr
\leq |z|\|\sigma_x\|_{L^\infty}\int_0^1\|y(r)\|_{H^1}^2dr\nonumber\\
\leq& |z|\|\sigma_x\|_{L^\infty}\|u(s,x)\|_{H^1}^2\int_0^1e^{|z|\|\sigma_x\|_{L^\infty}r}dr\nonumber\\
\leq& (e^{|z|\|\sigma_x\|_{L^\infty}}-1)\|u(s,x)\|_{H^1}^2.
\end{align}
Since $H_2'=3y^2-y_x^2-2yy_{xx}+4ky,$ we also have
\begin{align}
&H_2(\Phi(1,z,u))-H_2(u)=-z\int_0^1H'_2(y(r)) \sigma y_x(r)dr\nonumber\\
=&-z\int_0^1(3y^2-y_x^2-2yy_{xx}+4ky, \sigma y_x)(r)dr\nonumber\\
=&z\int_0^1(\sigma_x, y^3-yy_x^2+4ky^2)(r)dr\nonumber\\
\leq& |z|\|\sigma_x\|_{L^\infty}\int_0^1(\|y\|_{L^\infty}\|y\|_{H^1}^2+4k\|y\|_{L^2}^2)dr\nonumber\\
\leq& C|z|\|\sigma_x\|_{L^\infty}\int_0^1(\|y\|_{H^1}^2+\|y\|_{H^1}^3)dr\nonumber\\
\leq& C(e^{|z|\|\sigma_x\|_{L^\infty}}-1)\|u(s,x)\|_{H^1}^2
+C(e^{\frac32|z|\|\sigma_x\|_{L^\infty}}-1)\|u(s,x)\|_{H^1}^3.
\end{align}

Next, we prove \eqref{lmh12x-1ds}.
Using Burkh\"older-Davis-Gundy (BDG) inequality, H\"older inequality and \eqref{lmh12-1d}, we have
\begin{align}\label{yl32-3c2}
&\m{E}\sup_{0\leq t\leq T}|\int_0^t\int_{\m{Z}}[H_1(\Phi(1,z,u^\epsilon))-H_1(u^\epsilon)]\tilde{\mathcal{N}}(ds,dz)|\nonumber\\
\leq &C\m{E}(\int_0^T\int_{\m{Z}}
(e^{|z|\|\sigma_x\|_{L^\infty}}-1)^2\|u^\epsilon(s,x)\|_{H^1}^4\vartheta(dz)ds)^{1/2}\nonumber\\
\leq &C\m{E}\sup_{t\in[0,T]}\|u^\epsilon\|_{H^1}(\int_0^T\int_{\m{Z}}\|u^\epsilon\|_{H^1}^2(e^{|z|\|\sigma_x^n\|_{L^\infty}}-1)^2\vartheta(dz)ds)^{1/2}\nonumber\\
\leq &\frac1{4}\m{E}\sup_{t\in[0,T]}\|u^\epsilon\|_{H^1}^{2}+C\m{E}\int_0^T\|u^\epsilon\|_{H^1}^{2}ds,
\end{align}
Then, it follows from  \eqref{m1xq} and \eqref{yl32-3c2} that
\begin{align*}
 \m{E}\sup_{t\in[0, T]}\|u^\epsilon(t)\|_{H^1}^2
\leq 2\|\varphi_{c_0}(x)\|_{H^1}^2
+ C\m{E}\int_0^{T}\|u^\epsilon(r)\|_{H^1}^2dr,
\end{align*}
from which, the Gronwall inequality  yields \eqref{lmh12x-1ds}.
The proof is complete.
\end{proof}

\end{lemma}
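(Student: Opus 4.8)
The plan is to establish the two pointwise jump inequalities \eqref{lmh12-1d}--\eqref{lmh12-1ds} by expressing the increment of $H_i$ across the Marcus map $\Phi(1,z,\cdot)$ as a one-parameter integral along the translation flow, and then to derive the moment bound \eqref{lmh12x-1ds} from the It\^o expansion of Lemma \ref{lm21} via the Burkholder--Davis--Gundy inequality and Gr\"onwall's lemma.

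First I would fix $z\in\m{Z}$, $s\ge0$ and set $y(t,x)=\Phi(t,z,u^\epsilon(s,x))=u^\epsilon(s,x-\sigma z t)$, which solves $\p_t y=-z\sigma y_x$. Differentiating $\|y(t)\|_{H^1}^2$ in $t$ and integrating by parts in $x$ shows $\tfrac{d}{dt}\|y(t)\|_{H^1}^2\le |z|\,\|\sigma_x\|_{L^\infty}\|y(t)\|_{H^1}^2$, so Gr\"onwall gives the key exponential control $\|y(t)\|_{H^1}^2\le\|u^\epsilon(s)\|_{H^1}^2 e^{|z|\|\sigma_x\|_{L^\infty}t}$. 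Next, since $H_1$ and $H_2$ are conserved by the deterministic CH flow, I would write $H_i(\Phi(1,z,u^\epsilon))-H_i(u^\epsilon)=\int_0^1\tfrac{d}{dr}H_i(y(r))\,dr=-z\int_0^1(H_i'(y(r)),\sigma y_x(r))\,dr$, with $H_1'(y)=y-y_{xx}$ and $H_2'(y)=3y^2-y_x^2-2yy_{xx}+4ky$. Integrating by parts to move every derivative onto $\sigma$, the $H_1$ integrand is bounded by $\|\sigma_x\|_{L^\infty}\|y\|_{H^1}^2$, and the $H_2$ integrand splits into a quadratic piece bounded by $\|\sigma_x\|_{L^\infty}\|y\|_{H^1}^2$ and cubic pieces bounded by $C\|\sigma_x\|_{L^\infty}\|y\|_{H^1}^3$ using $H^1\hookrightarrow L^\infty$. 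Inserting the exponential bound from Step 1 and computing $\int_0^1 e^{|z|\|\sigma_x\|_{L^\infty}r}\,dr=\tfrac{e^{|z|\|\sigma_x\|_{L^\infty}}-1}{|z|\|\sigma_x\|_{L^\infty}}$ (with the exponent scaled by $\tfrac32$ for the cubic term, since then $\|y\|_{H^1}^3$ grows like $e^{\frac32|z|\|\sigma_x\|_{L^\infty}r}$) yields \eqref{lmh12-1d} and \eqref{lmh12-1ds}.

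For \eqref{lmh12x-1ds}, I would use $2H_1(u^\epsilon)=\|u^\epsilon\|_{H^1}^2$ and take the expectation of the supremum over $[0,T]$ in the identity \eqref{m1xq}. The $\tilde{\mathcal{N}}$-stochastic integral is handled by BDG, bounding the square of the integrand via \eqref{lmh12-1d}, extracting $\sup_{t\le T}\|u^\epsilon\|_{H^1}$ from one factor, and absorbing it into $\tfrac14\me\sup_{t\le T}\|u^\epsilon\|_{H^1}^2$ by Young's inequality; the remaining factor $\int_0^T\int_{\m{Z}}(e^{|z|\|\sigma_x\|_{L^\infty}}-1)^2\|u^\epsilon\|_{H^1}^2\,\vartheta(dz)\,ds$ is finite by the integrability hypotheses on $\vartheta$. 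The compensator term in \eqref{m1xq} is bounded directly by \eqref{lmh12-1d} together with $|z|$-integrability. One then arrives at $\me\sup_{[0,T]}\|u^\epsilon\|_{H^1}^2\le 2\|\vp_{c_0}\|_{H^1}^2+C\me\int_0^T\|u^\epsilon(r)\|_{H^1}^2\,dr$, and Gr\"onwall closes the estimate.

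The main obstacle is conceptual rather than computational: because the noise coefficient $u\mapsto\sigma u_x$ is a genuinely unbounded (first-order) operator, a naive It\^o expansion of $\|u^\epsilon\|_{H^1}^2$ produces terms involving $u^\epsilon_{xx}$ that do not obviously cancel. The Marcus formulation is what makes the estimate go through: the drift contribution vanishes by conservation of $H_1$ and $H_2$ for CH, and the $H^1$-norm growth along the translation flow is governed purely by $\|\sigma_x\|_{L^\infty}$ and not by any derivative of $u^\epsilon$. The delicate bookkeeping is to keep the exponential factor exactly at $e^{|z|\|\sigma_x\|_{L^\infty}}-1$ (and at $e^{\frac32|z|\|\sigma_x\|_{L^\infty}}-1$ for the cubic $H_2$-contribution), so that the right-hand sides are square-integrable against $\vartheta$, which is precisely what the BDG step of the moment bound requires.
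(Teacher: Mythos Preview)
Your proposal is correct and follows essentially the same route as the paper: the same one-parameter representation $y(r)=\Phi(r,z,u^\epsilon)$, the same Gr\"onwall bound $\|y(r)\|_{H^1}^2\le e^{|z|\|\sigma_x\|_{L^\infty}r}\|u^\epsilon\|_{H^1}^2$, the same integration by parts to pass all derivatives onto $\sigma$, and the same BDG/Young/Gr\"onwall combination for the moment bound. If anything, your treatment of the compensator term in \eqref{m1xq} is slightly more explicit than the paper's.
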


\begin{remark}[Full Euler-Poincar\'e structure in the noise] \label{remark1}
It can be verified that
there is no additional difficulty with the incorporation of full Euler-Poincar\'e noise
of the form $\zeta(u)\diamond dL(t)$ in \eqref{d6ge} in place of $\sigma u_x\diamond dL(t)$ in \eqref{d6a}.

We estimate \eqref{lmh12-1d} with the full Euler-Poincar\'e noise to explain it.
Fixed $z\in\m{Z}, s\in\mr$, let $y(t,x)=\Phi(t,z,u(s,x))$ be the solution of the following equation
\[
dy(t,x)=-z\zeta(y)(t,x)dt,\;\; y(0,x)=u(s,x).
\]
Then, using integration by parts and H\"older inequality,
\begin{align*}
&\frac{d}{dt}\|y(t,x)\|_{H^1}^2=-2z((1-\p_x^2)y, \zeta(y))\\
=&-2z((1-\p_x^2)y, 3\sigma y_x-2\sigma_xy+(1-\p_x^2)^{-1}(2\sigma_xy-y\p_x^3\sigma +\sigma_{xx}y_x)\\&+2k(1-\p_x^2)^{-1}\sigma_x)\\
\leq& C|z|(\|\sigma_x\|_{L^\infty}+\|\sigma_{xx}\|_{L^\infty}+\|\sigma_{xxx}\|_{L^\infty})\|y\|_{H^1}^2+C|z|\|\sigma_x\|_{L^2}\|y\|_{L^2}\\
\leq& C|z|(1+\|\sigma_x\|_{L^\infty}+\|\sigma_{xx}\|_{L^\infty}+\|\sigma_{xxx}\|_{L^\infty})\|y\|_{H^1}^2+C|z|\|\sigma_x\|_{L^2}^2,
\end{align*}
which implies, by Gronwall inequality
\begin{align}\label{rmyl2}
\|y(t,x)\|_{H^1}^2\leq \|u(s,x)\|_{H^1}^2e^{C_1|z|t}+C(e^{C_1|z|t}-1),
\end{align}
where
\[
C_1=C(1+\|\sigma_x\|_{L^\infty}+\|\sigma_{xx}\|_{L^\infty}+\|\sigma_{xxx}\|_{L^\infty}).
\]
By the mean value theorem and $H_1'(y)=y-y_{xx}$,
\begin{align*}
&H_1(\Phi(1,z,u))-H_1(u)=H_1(y(1,x))-H_1(y(0,x))\nonumber\\
=&\int_0^1H'_1(y(r)) y'(r)dr=-z\int_0^1H'_1(y(r)) \zeta(y)(r)dr\nonumber\\
=&-z\int_0^1(y-y_{xx}, \zeta(y))(r)dr\nonumber\\
\leq &C_1|z|\int_0^1\|y(r)\|_{H^1}^2dr+C|z|\int_0^1\|\sigma_x\|_{L^2}^2dr\nonumber\\
\leq& C_1|z|\|u(s,x)\|_{H^1}^2\int_0^1e^{C_1|z|r}dr
+  C_1C|z|\int_0^1(e^{C_1|z|r}-1)dr+C|z| \|\sigma_x\|_{L^2}^2\nonumber\\
\leq& (e^{C_1|z|}-1)\|u(s,x)\|_{H^1}^2+C(e^{C_1|z|}-1)+C|z|.
\end{align*}
Hence, the only extra requirement is that $\sigma\in W^{3,\infty}$ instead of $\sigma\in W^{2,\infty}$.

\end{remark}

Now, we give the proof of Theorem \ref{tm2}.

\begin{proof}[Proof of Theorem \ref{tm2}]
Denote $B(\vp_{c_0}(x),2\alpha)
=\{v\in H^1, \|v(x)-\vp_{c_0}(x)\|_{H^1}\leq2\alpha\}$
for $\alpha>0$. Then, consider a $C^2$ mapping
\begin{align*}
Y: &(c_0-2\alpha, c_0+2\alpha)\times (-2\alpha, 2\alpha)\times B(\vp_{c_0}(x),2\alpha)\to \mr^{2},\\
&(c, x_1, u)\to (Y_1, Y_2)
\end{align*}
defined by
\begin{align*}
&Y_1(c, x_1, u)=\int_\mr(u(x+x_1)-\vp_{c}(x))(1-\p_x^2)\p_x\varphi_{c_0}(x)dx,\\
&Y_2(c, x_1, u)=\int_\mr(u(x+x_1)-\vp_{c}(x))(1-\p_x^2)\varphi_{c_0}(x)dx.
\end{align*}

In the following, we verify that the function $Y$ satisfies the properties:

(i) $Y(c_0, 0, \vp_{c_0}(x))=(0,0).$

(ii)  By the dominated convergence theorem and the smoothness of $\vp_c$, the
partial derivatives $\frac{\p Y_1}{\p c}, \frac{\p Y_1}{\p x_1}, \frac{\p Y_2}{\p c}, \frac{\p Y_2}{\p x_1}$ are continuous. Indeed,
\begin{align*}
\frac{\p Y_1}{\p c}(c_0, 0, \vp_{c_0}(x))=&-\int_\mr\p_{c}\vp_{c}(x)(1-\p_x^2)\p_x\varphi_{c_0}(x)dx|_{(c_0, 0, \vp_{c_0}(x))},\\
\frac{\p Y_2}{\p c}(c_0, 0, \vp_{c_0}(x))=&-\int_\mr\p_{c}\vp_{c}(x)(1-\p_x^2) \varphi_{c_0}(x)dx|_{(c_0, 0, \vp_{c_0}(x))}\\
=&-\int_\mr\p_{c}((\vp_{c_0}(x))^2+(\p_x\vp_{c_0}(x))^2)dx\neq0,\\
\frac{\p Y_1}{\p x_1}(c_0, 0, \vp_{c_0}(x))=&\int_\mr\p_x\vp_{c_0}(x)(1-\p_x^2)\p_x\varphi_{c_0}(x)dx>0,
\end{align*}
and
\begin{align*}
\frac{\p Y_2}{\p x_1}(c_0, 0, \vp_{c_0}(x))=\int_\mr\p_x\vp_{c_0}(x)(1-\p_x^2)\varphi_{c_0}(x)dx=0.
\end{align*}
Hence, the determinant of the matrix
$
Y'_{(c, x_1)}(c_0, 0, \vp_{c_0}(x))\neq0.
$
So, from the implicit function theorem, we find that there exists $\alpha_0 > 0$ and the
uniquely determined $C^2$-functions $(c(u), x_1(u))$ defined for $u\in B(\vp_{c_0}(x),2\alpha)$,
such that
\[
Y(c(u), x_1(u), u)=0.
\]
Moreover, reducing again $\alpha$ if necessary, we may apply the implicit function theorem uniformly around the points $(c,0,u_0)$ satisfying
\[
Y(c, 0, u_0)=0,\;\;|c-c_0|<\alpha,\;\;\hbox{and}\;\;\|u_0-\varphi_{c_0}\|_{H^1}<\alpha.
\]
Applying this with $u = u^\epsilon(t)$, we get the existence of $c^\epsilon(t)=c(u^\epsilon(t))$ and $x^\epsilon(t)=x_1(u^\epsilon(t))$
such that the orthogonality conditions \eqref{orth1}  hold with  $\epsilon\eta^\epsilon(t)=u^\epsilon(t,x+x^\epsilon(t))
-\varphi_{c^\epsilon(t)}(x)$.

Since $u^\epsilon(t)$ is a $H^1$-valued process, it follows that $u^\epsilon(t)$ is a semi-martingale process in $H^{-1}$. Noting
that $Y$ is a $C^1$ functional of $u$ on $H^{-1}$, the processes  $c^\epsilon(t)$ and $x^\epsilon(t)$ are given
locally by a deterministic $C^2$ function of $u^\epsilon(t)\in H^{-1}$. Then the It\^o formula shows that $c^\epsilon(t)$ and $x^\epsilon(t)$ are semi-martingale processes. Moreover, since it is clear that $Y(c^\epsilon(t), x^\epsilon(t), u^\epsilon(t))=0$, the existence of $c^\epsilon(t)$ and $x^\epsilon(t)$ holds as long as
\begin{align}\label{d7}
|c^\epsilon(t)-c_0|<\alpha\;\;\hbox{and}\;\;
\|u^\epsilon(t,x+x^\epsilon(t))
-\varphi_{c^\epsilon(t)}(x)\|_{H^1}<\alpha.
\end{align}

We now define two stopping times
\begin{align*}
&\tilde{\tau}_\alpha^\epsilon=\inf\{t\geq0, |c^\epsilon(t)-c_0|\geq\alpha\;\;\hbox{or}\;\;
 \|u^\epsilon(t,x+x^\epsilon(t))
-\varphi_{c_0}(x)\|_{H^1}\geq\alpha\},\\
&\tau^\epsilon_\beta=\inf\{t\geq0, |c^\epsilon(t)-c_0|\geq\beta\;\;\hbox{or}\;\;
 \|u^\epsilon(t,x+x^\epsilon(t))
-\varphi_{c^\epsilon(t)}(x)\|_{H^1}\geq\beta\}.
\end{align*}
Since the inequality $\|\vp_{c^\epsilon(t)}-\vp_{c_0}\|_{H^1}\leq C\alpha$ holds as long as $|c^\epsilon(t)-c_0|\leq \alpha\leq\alpha_0$, with a
constant $C$ depending only on $\alpha_0$ and $c_0$, it follows obviously that
\[
\tau_\alpha^\epsilon\leq \tilde{\tau}_{(C+1)\alpha}^\epsilon\leq \tau_{(C+1)^2\alpha}^\epsilon.
\]
Taking $\alpha_0$ sufficiently small again, the processes $c^\epsilon(t)$ and $x^\epsilon(t)$
 are defined for all $t\leq \tau^\epsilon_{\alpha_0}$, and
satisfy \eqref{d7} for all $t\leq \tau^\epsilon_{\alpha}, \alpha\leq\alpha_0$ under the orthogonality
conditions \eqref{orth1}.

To prove \eqref{exit} for any $T>0$,
let $H_c=cH_1-H_2$.
By Taylor formula, we have
\begin{align}\label{psi6}
&H_{c_0}(u^\epsilon(t,x+x^\epsilon(t)))-H_{c_0}(\varphi_{c^\epsilon(t)})\nonumber\\
=&(H_{c_0}'(\varphi_{c^\epsilon(t)}), \epsilon\eta^\epsilon(t))+
(H_{c_0}''(\varphi_{c^\epsilon(t)})\epsilon\eta^\epsilon(t), \epsilon\eta^\epsilon(t))
+o(\|\epsilon\eta^\epsilon(t)\|_{H^1}^2).
\end{align}
Note that $o(\|\epsilon\eta^\epsilon(t)\|_{H^1}^2)$ is uniform in $\omega, \epsilon$ and $t$, since $H_{c_0}'(\varphi_{c})$ and $H_{c_0}''(\varphi_{c})$ depend
continuously on $c$, and since $|c^\epsilon(t)-c_0|\leq\alpha\;\;\hbox{and}\;\;
\|u^\epsilon(t,x+x^\epsilon(t))-\varphi_{c^\epsilon(t)}\|_{H^1}\leq\alpha$ for all $t\leq \tau^\epsilon_\alpha$. We then assume $\alpha_0$ small enough so that the last term is less than  $\frac{C}{4}\|\epsilon\eta^\epsilon(t)\|_{H^1}^2$
for all $t\leq \tau^\epsilon_\alpha$.

On account of the results on the spectrum of $\mathcal{L}_{c_0}=H_{c_0}''(\vp_{c^0})$ derived in \cite{constr02} (see also (4.26) in Lemma 4.3 in \cite{dika07}),  there exist $\delta>0$ and
$C_\delta>0$, such that if for $c_0\geq k$,
\[
|(w,(1-\p_x^2)\vp_{c_0})|+|(w,(1-\p_x^2)\p_x\vp_{c_0})|\leq \delta\|w\|_{H^1},
\]
then
\[
(\mathcal{L}_{c_0}w,w)\geq C_\delta\|w\|_{H^1}^2.
\]
It then follows that
\begin{align}\label{psi6a}
&(H_{c_0}''(\varphi_{c^\epsilon(t)})\epsilon\eta^\epsilon(t), \epsilon\eta^\epsilon(t))\nonumber\\
=&(H_{c_0}''(\varphi_{c^\epsilon(t)})\epsilon\eta^\epsilon(t)-H_{c_0}''(\varphi_{c_0})\epsilon\eta^\epsilon(t), \epsilon\eta^\epsilon(t))
+(H_{c_0}''(\varphi_{c_0})\epsilon\eta^\epsilon(t), \epsilon\eta^\epsilon(t))\nonumber\\
\geq&C_\delta \|\epsilon\eta^{\epsilon }\|_{H^1}^2-\|H_{c_0}''(\varphi_{c^\epsilon(t)})-H_{c_0}''(\varphi_{c_0})\|_{\mathcal{L}(H^1, H^{-2})}\|\epsilon\eta^{\epsilon }\|_{H^1}^2\nonumber\\
\geq&C_\delta \|\epsilon\eta^{\epsilon }\|_{H^1}^2-C|c^\epsilon(t)-c_0|\|\epsilon\eta^{\epsilon }\|_{H^1}^2.
\end{align}
Since $H_{c^\epsilon}'(\varphi_{c^\epsilon})=0$ by \eqref{i4}, H\"older and Young inequalities yield
\begin{align}\label{psi7a}
|(H_{c_0}'(\varphi_{c^\epsilon(t)}), \epsilon\eta^\epsilon(t))|
=&|(H_{c_0}'(\varphi_{c^\epsilon})-H_{c^\epsilon}'(\varphi_{c^\epsilon}), \epsilon\eta^\epsilon(t))\nonumber\\
=&2((c^\epsilon-c_0)\varphi_{c^\epsilon},\epsilon\ee)|\nonumber\\
\leq &C|c^\epsilon(t)-c_0|^2+\frac{C_\delta}2\|\epsilon\eta^{\epsilon }\|_{H^1}^2.
\end{align}
It follows from \eqref{psi6}-\eqref{psi7a} that for all $\alpha\leq\alpha_0\leq \frac{C_\delta}2$,
and for all $t\leq \tau_\alpha^\epsilon$, we have
\begin{align*}
H_{c_0}(u^\epsilon(t,x+x^\epsilon(t)))-H_{c_0}(\varphi_{c^\epsilon(t)})\geq \frac{C_\delta}4\|\epsilon\eta^{\epsilon }(t)\|_{H^1}^2-C|c^\epsilon(t)-c_0|^2,
\end{align*}
for a constant C depending only on $\alpha_0$ and $c_0.$ Hence, using It\^o formula to $H_{c_0}(u^\epsilon(t,x))$, we have
\begin{align}\label{psi8}
&\|\epsilon\eta^{\epsilon }(t, x-x^\epsilon(t))\|_{H^1}^2\leq C[H_{c_0}(u^\epsilon(t,x))-H_{c_0}(\varphi_{c^\epsilon(t)}(x-x^\epsilon(t)))]
+C|c^\epsilon(t)-c_0|^2\nonumber\\
=& C\{H_{c_0}(\varphi_{c_0})-H_{c_0}(\varphi_{c^\epsilon(t)}(x-x^\epsilon))\nonumber\\&
+\int_0^t\int_{\m{Z}}[H_{c_0}(\Phi(1,z, u^\epsilon(s-,x))-H_{c_0}(u^\epsilon(s-,x))]\tilde{\mathcal{N}}(ds,dz)\nonumber\\
&+\int_0^t\int_{\m{Z}}[H_{c_0}(\Phi(1,z, u^\epsilon(s,x))-H_{c_0}(u^\epsilon(s,x))-\epsilon zH_{c_0}'(u^\epsilon)\sigma u^\epsilon_x]\vartheta(dz)ds\}\nonumber\\&
+C|c^\epsilon(t)-c_0|^2,
\end{align}
where
\[
H_{c_0}'(u)=c_0H_1'(u)-H_2'(u)=c_0(u-u_{xx})
-(3u^2-u_x^2-2uu_{xx}+4ku).
\]

We now estimate $|c^\epsilon(t)-c_0|^2$. The orthogonality conditions \eqref{orth1} imply
\begin{align}\label{cx1}
\|u^\epsilon(t)\|_{H^1}^2=&\|\epsilon\eta^\epsilon+\varphi_{c^\epsilon(t)}\|_{H^1}^2\nonumber\\
=&\|\epsilon\eta^\epsilon\|_{H^1}^2+\|\varphi_{c^\epsilon(t)}\|_{H^1}^2
+2(\epsilon\eta^\epsilon, (1-\p_x^2)(\varphi_{c^\epsilon(t)}-\varphi_{c_0}))
\end{align}
and by \eqref{m1xq},
\begin{align}\label{cx2}
\|u^\epsilon(t)\|_{H^1}^2=&\|\varphi_{c_0}\|_{H^1}^2+\int_0^t\int_{\m{Z}}G_1(u^\epsilon(s-,x))\tilde{\mathcal{N}}(ds,dz)\nonumber\\&
+\int_0^t\int_{\m{Z}}[G_1(u^\epsilon(s,x))+\eps z(\sigma_x, u^{\epsilon2}-u^{\epsilon2}_x)]\vartheta(dz)ds,
\end{align}
where
\[
G_1(u^\epsilon(s,x))=\|\Phi(1,z, u^\epsilon(s,x)\|_{H^1}^2-\|u^\epsilon(s,x)\|_{H^1}^2.
\]
Thus, it follows from \eqref{cx1}-\eqref{cx2} that for some constants $C$ and $\mu,$ depending only on $\alpha_0$ and $c_0$,
\begin{align}\label{cx3}
&\mu|c^\epsilon(t)-c_0|\nonumber\\
\leq& | \|\varphi_{c^\epsilon(t)}\|_{H^1}^2-\|\varphi_{c_0}\|_{H^1}^2| \nonumber\\
\leq& \|\epsilon\eta^\epsilon\|_{H^1}^2+2\|\epsilon\eta^\epsilon\|_{H^1}(\|\varphi_{c^\epsilon(t)}-\varphi_{c_0}\|_{H^1})
+\int_0^t\int_{\m{Z}}G_1(u^\epsilon(s-,x))\tilde{\mathcal{N}}(ds,dz)\nonumber\\&
+\int_0^t\int_{\m{Z}}[G_1(u^\epsilon(s,x))+\eps z(\sigma_x, u^{\epsilon2}-u^{\epsilon2}_x)]\vartheta(dz)ds
\nonumber\\
\leq& \|\epsilon\eta^\epsilon\|_{H^1}^2+C\alpha|c^\epsilon(t)-c_0|
+\int_0^t\int_{\m{Z}}G_1(u^\epsilon(s-,x))\tilde{\mathcal{N}}(ds,dz)\nonumber\\&
+\int_0^t\int_{\m{Z}}[G_1(u^\epsilon(s,x))+\eps z(\sigma_x, u^{\epsilon2}-u^{\epsilon2}_x)]\vartheta(dz)ds.
\end{align}
Hence, choosing $\alpha_0$ sufficient small, we get
\begin{align}\label{cx4}
|c^\epsilon(t)-c_0|^2
\leq& C[\|\epsilon\eta^\epsilon\|_{H^1}^4+
|\int_0^t\int_{\m{Z}}G_1(u^\epsilon(s-,x))\tilde{\mathcal{N}}(ds,dz)|^2\nonumber\\&
+|\int_0^t\int_{\m{Z}}[G_1(u^\epsilon(s,x))+\eps z(\sigma_x, u^{\epsilon2}-u^{\epsilon2}_x)]\vartheta(dz)ds|^2].
\end{align}
Because $H'_{c_0}(\varphi_{c_0})=0,$ we have
\begin{align}\label{cx4a}
|H_{c_0}(\varphi_{c_0})-H_{c_0}(\varphi_{c^\epsilon(t)})|
\leq C\|\varphi_{c_0}-\varphi_{c^\epsilon(t)}\|_{H^1}^2\leq
C|c^\epsilon(t)-c_0|^2.
\end{align}
Then, inserting \eqref{cx4}-\eqref{cx4a} in the right hand of \eqref{psi8}, we obtain
\begin{align}\label{psi12}
&\|\epsilon\eta^{\epsilon }(t)\|_{H^1}^2
\leq C\{\|\epsilon\eta^\epsilon\|_{H^1}^4+|\int_0^t\int_{\m{Z}}G_1(u^\epsilon(s-,x))\tilde{\mathcal{N}}(ds,dz)|^2\nonumber\\&
+|\int_0^t\int_{\m{Z}}[G_1(u^\epsilon(s,x))+\eps z(\sigma_x, u^{\epsilon2}-u^{\epsilon2}_x)]\vartheta(dz)ds|^2\nonumber\\
&
+\int_0^t\int_{\m{Z}}[H_{c_0}(\Phi(1,z,u^\epsilon(s-,x))-H_{c_0}(u^\epsilon(s-,x))]\tilde{\mathcal{N}}(ds,dz)\nonumber\\
&+\int_0^t\int_{\m{Z}}[H_{c_0}(\Phi(1,z,u^\epsilon(s,x))-H_{c_0}(u^\epsilon(s,x))-\epsilon zH_{c_0}'(u^\epsilon)\sigma u^\epsilon_x]\vartheta(dz)ds\}.
\end{align}

Now, fix $T>0$ and set
\begin{align*}
&\Omega_1^{T, \epsilon, \alpha}=\{\omega\in \Omega, \tau_\alpha^\epsilon\leq T, \|\epsilon\eta^\epsilon(\tau_\alpha^\epsilon)\|_{H^1}=\alpha\},\\
&\Omega_2^{T, \epsilon, \alpha}=\{\omega\in \Omega, \tau_\alpha^\epsilon\leq T, |c^\epsilon(\tau_\alpha^\epsilon)-c_0|=\alpha  \}
\end{align*}
 so that
\begin{align*}
\m{P}(\tau_\alpha^\epsilon\leq T)\leq \m{P}(\Omega_1^{T, \epsilon, \alpha})+\m{P}(\Omega_2^{T, \epsilon, \alpha}).
\end{align*}
Let $\alpha_0>0$ be small enough so that $C\alpha_0^2\leq1/2$.
Multiplying both sides of \eqref{psi12}
by $1_{\Omega_1^{T, \epsilon, \alpha}}$, for $\alpha\leq\alpha_0$ and taking expectation with $t=\tau_\alpha^\epsilon\wedge T$, we have
\begin{align*}
&\frac{\alpha^2}2\m{P}(\Omega_1^{T, \epsilon, \alpha})\nonumber\\
\leq&
C\{\me|\int_0^t\int_{\m{Z}}G_1(u^\epsilon(s-,x))\tilde{\mathcal{N}}(ds,dz)1_{\Omega_1^{T, \epsilon, \alpha}}|^2\nonumber\\&
+\me|\int_0^t\int_{\m{Z}}[G_1(u^\epsilon(s,x))+\eps z(\sigma_x, u^{\epsilon2}-u^{\epsilon2}_x)]\vartheta(dz)ds1_{\Omega_1^{T, \epsilon, \alpha}}|^2\nonumber\\
&
+\me[\int_0^t\int_{\m{Z}}[H_{c_0}(\Phi(1,z,u^\epsilon(s-,x))-H_{c_0}(u^\epsilon(s-,x))]\tilde{\mathcal{N}}(ds,dz)1_{\Omega_1^{T, \epsilon, \alpha}}]\nonumber\\
&+\me[\int_0^t\int_{\m{Z}}[H_{c_0}(\Phi(1,z,u^\epsilon(s,x))-H_{c_0}(u^\epsilon(s,x))-\epsilon zH_{c_0}'(u^\epsilon)\sigma u^\epsilon_x]\vartheta(dz)ds1_{\Omega_1^{T, \epsilon, \alpha}}]\}.
\end{align*}
Using Lemma \ref{lm21xx},
\begin{align*}
&G_1(u^\epsilon(s,x))
\leq  C(e^{\eps|z|\|\sigma_x\|_{L^\infty}}-1)\|u(s,x)\|_{H^1}^2,\\
&H_{c_0}(\Phi(1,z,u^\epsilon(s,x))-H_{c_0}(u^\epsilon(s,x))\\
\leq &C(e^{\eps|z|\|\sigma_x\|_{L^\infty}}-1)\|u^\eps(s,x)\|_{H^1}^2
+C(e^{\frac32\eps|z|\|\sigma_x\|_{L^\infty}}-1)\|u^\eps(s,x)\|_{H^1}^3,\\
&H'_{c_0}(u^\epsilon)\sigma u_x\leq C(\|u^\eps\|_{H^1}^2+\|u^\eps\|_{H^1}^3),
\end{align*}
Then, by Cauchy inequality, BDG inequality and $\|u^\epsilon\|_{H^1}^2\leq C$ a.s., we can get
\begin{align*}
&\frac{\alpha^2}2\m{P}(\Omega_1^{T, \epsilon, \alpha})\nonumber\\
\leq&CT\int_{\m{Z}}(e^{\eps|z|\|\sigma_x\|_{L^\infty}}-1)^2\vartheta(dz)\m{P}(\Omega_1^{T, \epsilon, \alpha})\\&
+CT^2[|\int_{\m{Z}}(e^{\eps|z|\|\sigma_x\|_{L^\infty}}-1)\vartheta(dz)|+\epsilon^2\|\sigma_x\|_{L^\infty}^2|\int_{\m{Z}}z\vartheta(dz)|]^2\m{P}(\Omega_1^{T, \epsilon, \alpha})\nonumber\\&
+C\sqrt{T}(\int_{\m{Z}}(e^{\eps|z|\|\sigma_x\|_{L^\infty}}-1)^2+(e^{\frac32\eps|z|\|\sigma_x\|_{L^\infty}}-1)^2\vartheta(dz))^{1/2}\m{P}(\Omega_1^{T, \epsilon, \alpha})^{1/2},
\end{align*}
and it follows that, for $\epsilon$ sufficient small
\begin{align*}
\m{P}(\Omega_1^{T, \epsilon, \alpha})\leq C\frac{\sqrt{T}}{\alpha^2}b^{1/2}(\epsilon)\m{P}(\Omega_1^{T, \epsilon, \alpha})^{1/2},
\end{align*}
where $b(\epsilon)=\int_{\m{Z}}((e^{\eps|z|\|\sigma_x\|_{L^\infty}}-1)^2+(e^{\frac32\eps|z|\|\sigma_x\|_{L^\infty}}-1)^2)\vartheta(dz),$
which implies
\begin{align}\label{p1-a}
\m{P}(\Omega_1^{T, \epsilon, \alpha})\leq C\frac{T}{\alpha^4}b(\epsilon).
\end{align}
Coming back to \eqref{cx4} and using the same argument as above, we can obtain
\[
\alpha^2\m{P}(\Omega_2^{T, \epsilon, \alpha})\leq C(\eps)\m{P}(\Omega_2^{T, \epsilon, \alpha})
+C\sqrt{T}b^{1/2}(\epsilon)\m{P}(\Omega_2^{T, \epsilon, \alpha})^{1/2},
\]
then, for $\eps$ sufficient small, we have
\begin{align}\label{p1-b}
\m{P}(\Omega_2^{T, \epsilon, \alpha})\leq C\frac{T}{\alpha^4}b(\epsilon).
\end{align}
Hence, \eqref{exit} follows from \eqref{p1-a}-\eqref{p1-b}  for $\alpha$ and $\epsilon$ sufficient small.
\end{proof}

\smallskip



\section{Modulation equations}\label{s3}

In this section, we derive the equation coupling the modulation parameters $x^\epsilon, c^\epsilon$ to the remaining term $\ee.$

\begin{lemma}
Under the assumptions of Theorem \ref{th3}, $\eta^\epsilon$ satisfies the equation
\begin{align}\label{me1}
d\eta^\epsilon=&\frac12(1-\p_x^2)^{-1}\p_x\mathcal{L}_{c^\epsilon}\eta^\epsilon dt +( y^\epsilon \p_x\vp_{c^\epsilon}-\p_c\vp_{c^\epsilon}a^\epsilon) dt
+\epsilon y^\epsilon \ee_x dt+\epsilon f(\ee)dt \nonumber\\
&
+[(\sigma\p_x\vp_{c^\epsilon}+\p_x\vp_{c^\epsilon}  \mu^\epsilon)-\p_c\vp_{c^\epsilon}b^\epsilon
+\epsilon (\sigma\ee_x+\ee_x\mu^\epsilon)]\diamond dL(t),
\end{align}
where
\begin{align}\label{me2}
&f(\ee)=-\ee\eta^\epsilon_x-(1-\p_x^2)^{-1}\p_x(\eta^{\epsilon2}+\frac12\eta^{\epsilon2}_x).
\end{align}

\begin{proof}
We write \eqref{d2-ch} in the Hamiltonian form
\[
du+(1-\p_x^2)^{-1}\p_xH_2'(u)dt+\epsilon \sigma u_x\diamond dL(t).
\]
Then, using \eqref{xe1}-\eqref{ce1}, we have
\begin{align}
&du(t,x+x^\epsilon)=-(1-\p_x^2)^{-1}\p_xH_2'(u)(t,x+x^\epsilon)dt+\epsilon \sigma u_x(t,x+x^\epsilon)\diamond dL(t)\nonumber\\
&+u_x(t,x+x^\epsilon)c^\epsilon dt+\epsilon u_x(t,x+x^\epsilon) y^\epsilon dt+\epsilon u_x(t,x+x^\epsilon) \mu^\epsilon\diamond dL(t),\label{me3}\\
&d\vp_{c^\epsilon}=\epsilon\p_c\vp_{c^\epsilon}a^\epsilon dt+\epsilon\p_c\vp_{c^\epsilon}b^\epsilon \diamond dL(t).\label{me4}
\end{align}
Replacing $u(t,x+x^\epsilon)$ by $\vp_{c^\epsilon}(x)+\epsilon\ee(t,x)$ and using \eqref{i4}-\eqref{i5},
\[
H_{2}'(\vp_{c^\epsilon})=c^\epsilon H_1'(\vp_{c^\epsilon})=c^\epsilon(\vp_{c^\epsilon}-\p_x^2\vp_{c^\epsilon}),
\]
we have
\begin{align*}
&H_2'(u)(t,x+x^\epsilon)=(\frac32u^2-\frac12u_x^2-uu_{xx}+2ku)(t,x+x^\epsilon)\nonumber\\
=&H_2'(\vp_{c^\epsilon})+H_2'(\ee)+3\epsilon\ee \vp_{c^\epsilon}-\epsilon\ee\p_x^2\vp_{c^\epsilon}-\epsilon\ee_x\p_x\vp_{c^\epsilon}-\ee_{xx}\vp_{c^\epsilon}\nonumber\\
=&c^\epsilon(\vp_{c^\epsilon}-\p_x^2\vp_{c^\epsilon})-\frac12\mathcal{L}_{c^\epsilon}(\epsilon\ee)-\epsilon c^\epsilon\ee_{xx}+\epsilon c^\epsilon\ee+\frac32\epsilon^2\eta^{\epsilon2}-\frac12\epsilon^2\eta^{\epsilon2}_x-\epsilon^2\ee\ee_{xx}.
\end{align*}
Hence,
\begin{align}\label{me5}
&-(1-\p_x^2)^{-1}\p_xH_2'(u)(t,x+x^\epsilon)\nonumber\\
=&\frac\epsilon2(1-\p_x^2)^{-1}\p_x\mathcal{L}_{c^\epsilon}(\ee)
-c^\epsilon\p_x\vp_{c^\epsilon}-\epsilon c^\epsilon\ee_x+\epsilon^2f(\ee),
\end{align}
where $f(\ee)$ is given by \eqref{me2}.
Applying $(1-\p_x^2)^{-1}$ to both sides of \eqref{me3}, then replacing $u(t,x+x^\epsilon)$ by $\vp_{c^\epsilon}(x)+\epsilon\ee(t,x)$
and putting \eqref{me4}-\eqref{me5} into \eqref{me3}, we get \eqref{me1}.
\end{proof}

\end{lemma}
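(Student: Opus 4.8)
The plan is to obtain \eqref{me1} on the stochastic interval $t\le\tau_\alpha^\epsilon$ by transporting the equation for $u^\epsilon$ into the frame moving with $x^\epsilon(t)$ and then subtracting the modulated profile $\vp_{c^\epsilon(t)}$. By Theorem \ref{tm2}, on $[0,\tau_\alpha^\epsilon]$ the scalar processes $x^\epsilon(t)$ and $c^\epsilon(t)$ are semi-martingales with $\|\epsilon\ee(t)\|_{H^1}\le\alpha$ and $|c^\epsilon(t)-c_0|\le\alpha$; one writes their canonical (Marcus-type) semi-martingale decompositions in the form \eqref{xe1}--\eqref{ce1}, which merely names the adapted coefficients $y^\epsilon,\mu^\epsilon,a^\epsilon,b^\epsilon$. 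For the present lemma these may be treated as arbitrary adapted processes; their explicit identification, by differentiating the orthogonality relations \eqref{orth1} in time, is a separate step not needed here. The three ingredients are then (a) the Hamiltonian form of the equation for $u^\epsilon$, (b) a moving-frame change of variables, and (c) the algebraic linearization of the Camassa-Holm nonlinearity about $\vp_{c^\epsilon}$.

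First I would rewrite \eqref{d2-ch} as $du^\epsilon=-(1-\p_x^2)^{-1}\p_xH_2'(u^\epsilon)\,dt-\epsilon\sigma u^\epsilon_x\diamond dL(t)$, using the identity $(1-\p_x^2)^{-1}\p_xH_2'(u)=uu_x+P_x$ with $P$ as in \eqref{p}; here $H_2'$ denotes the Fr\'echet derivative $\tfrac32u^2-\tfrac12u_x^2-uu_{xx}+2ku$, for which the profile equation \eqref{i4} reads $H_2'(\vp_{c^\epsilon})=c^\epsilon H_1'(\vp_{c^\epsilon})=c^\epsilon(\vp_{c^\epsilon}-\p_x^2\vp_{c^\epsilon})$. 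Next I would apply the It\^o formula of Lemma \ref{ito2} to the composition $v(t,x):=u^\epsilon(t,x+x^\epsilon(t))$, regarded as the image of the pair $(u^\epsilon,x^\epsilon)$ under the translation map $(w,a)\mapsto w(\cdot+a)$, which is $C^1$ from $H^1\times\mr$ into $L^2$ with the required local H\"older regularity; the smoothness of $\vp_c$ handles the soliton part, so Lemma \ref{ito2} applies with $U=H^1\times\mr$, $V=L^2$. The decisive feature is that the Marcus canonical integral obeys the ordinary (Stratonovich-type) chain rule: the pure-jump $L$ has no continuous bracket, and the simultaneous jumps of $u^\epsilon$ (translation along the flow $\Phi$) and of $x^\epsilon$ are exactly reconciled by the flow defining the $\diamond$-integral, so no second-order correction appears and $dv=(du^\epsilon)(t,x+x^\epsilon)+u^\epsilon_x(t,x+x^\epsilon)\diamond dx^\epsilon(t)$; substituting the Hamiltonian form of $du^\epsilon$ and \eqref{xe1} produces the transported equation. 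The same chain rule applied to $c\mapsto\vp_c$, with \eqref{ce1}, gives $d\vp_{c^\epsilon}=\p_c\vp_{c^\epsilon}\diamond dc^\epsilon=\epsilon a^\epsilon\p_c\vp_{c^\epsilon}\,dt+\epsilon b^\epsilon\p_c\vp_{c^\epsilon}\diamond dL(t)$.

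It then remains to substitute $v=\vp_{c^\epsilon}+\epsilon\ee$ throughout and expand. Since $H_2'$ is quadratic in its argument, $H_2'(\vp_{c^\epsilon}+\epsilon\ee)$ splits into the $O(1)$ term $H_2'(\vp_{c^\epsilon})$, an $O(\epsilon)$ term given by the linearization, and an $O(\epsilon^2)$ term quadratic in $\ee$; using \eqref{i4} and the definition \eqref{i5} of $\mathcal{L}_{c^\epsilon}$ one obtains $-(1-\p_x^2)^{-1}\p_xH_2'(v)=\tfrac{\epsilon}{2}(1-\p_x^2)^{-1}\p_x\mathcal{L}_{c^\epsilon}\ee-c^\epsilon\p_x\vp_{c^\epsilon}-\epsilon c^\epsilon\ee_x+\epsilon^2f(\ee)$ with $f$ as in \eqref{me2}. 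Feeding this into the transported equation, the $O(1)$ term $-c^\epsilon\p_x\vp_{c^\epsilon}$ cancels exactly against the $c^\epsilon\,dt$ contribution of $u^\epsilon_x(t,x+x^\epsilon)\diamond dx^\epsilon=(\p_x\vp_{c^\epsilon}+\epsilon\ee_x)\diamond dx^\epsilon$; then subtracting the equation for $\vp_{c^\epsilon}$, using $\epsilon\,d\ee=dv-d\vp_{c^\epsilon}$, dividing by $\epsilon$, and collecting the $dt$- and $\diamond dL$-terms reproduces exactly \eqref{me1}.

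The step I expect to be the main obstacle is the rigorous justification of (b): confirming that composing the solution of a Marcus-form SPDE with the real-valued Marcus semi-martingale $x^\epsilon$ is again governed by the naive chain rule, with the jump contributions of $u^\epsilon$ and of $x^\epsilon$ combining without residual compensator terms and with the $\diamond dx^\epsilon$ term correctly absorbing the interaction of the two simultaneous jumps. This is handled by invoking Lemma \ref{ito2} with $U=H^1\times\mr$, $V=L^2$ for the translation map, and by working throughout on $[0,\tau_\alpha^\epsilon]$, where the a priori bounds of Theorem \ref{tm2} keep $\ee$ in $H^1$, $c^\epsilon$ near $c_0$, and $\|u^\epsilon\|_{H^1}$ bounded, so that all integrals against $\tilde{\mathcal N}$ and against $\vartheta$ converge absolutely and $\ee$ is a well-defined $L^2$-valued process in $\m{D}([0,\tau_\alpha^\epsilon];L^2)$. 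The remaining items — the identity $(1-\p_x^2)^{-1}\p_xH_2'(u)=uu_x+P_x$, the quadratic expansion of $H_2'$, and the cancellation of the $c^\epsilon\p_x\vp_{c^\epsilon}$ terms — are routine bookkeeping.
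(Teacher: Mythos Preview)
Your proposal is correct and follows essentially the same route as the paper: write \eqref{d2-ch} in Hamiltonian form, pass to the moving frame via the Marcus chain rule using \eqref{xe1}--\eqref{ce1}, expand $H_2'$ quadratically about $\vp_{c^\epsilon}$ using \eqref{i4}--\eqref{i5}, subtract $d\vp_{c^\epsilon}$, and divide by $\epsilon$. If anything, you are more explicit than the paper about the justification of step (b) through Lemma~\ref{ito2}; the paper simply records the transported equation \eqref{me3} and the chain-rule identity \eqref{me4} without further comment.
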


\begin{lemma}
Under the assumptions of Theorem \ref{th3}, the modulation parameters satisfy the system of the equation
\begin{align}
&A^\epsilon(t) B^\epsilon(t)=D^\epsilon(t),\label{me7}\\
&A^\epsilon(t) Y^\epsilon(t)=E^\epsilon(t)\label{me8},
\end{align}
where

\begin{align}\label{me9}
A^\epsilon(t)=
{\left( \begin{array}{ccc}
   (\p_x\vp_{c^\epsilon}+\epsilon\ee_x, (1-\p_x^2)\p_x\vp_{c_0})& -(\p_c\vp_{c^\epsilon}, (1-\p_x^2)\p_x\vp_{c_0})\\
   (\p_x\vp_{c^\epsilon}, (1-\p_x^2)\vp_{c_0})& -(\p_c\vp_{c^\epsilon}, (1-\p_x^2)\vp_{c_0})
  \end{array}
\right )},
\end{align}
\begin{align}\label{me6}
B^\epsilon(t)=
{\left( \begin{array}{ccc}
    \mu^\epsilon(t) \\
   b^\epsilon(t)
  \end{array}
\right )},\;\;
  Y^\epsilon(t)=
{\left( \begin{array}{ccc}
    y^\epsilon(t) \\
   a^\epsilon(t)
\end{array}
\right )},
\end{align}
\begin{align}\label{me10}
D^\epsilon(t)=
{\left( \begin{array}{ccc}
   -(\sigma\p_x\vp_{c^\epsilon}+\eps\sigma\ee_x, (1-\p_x^2)\p_x\vp_{c_0}))\\
   -(\sigma\p_x\vp_{c^\epsilon}+\eps\sigma\ee_x, (1-\p_x^2)\vp_{c_0})
  \end{array}
\right )},
\end{align}
and
\begin{align}\label{me11}
  E^\epsilon(t)=
{\left( \begin{array}{ccc}
 (-\frac12\p_x\mathcal{L}_{c^\epsilon}\eta^\epsilon, \p_x\vp_{c_0})-\epsilon (f(\ee), (1-\p_x^2)\p_x\vp_{c_0})\\
    (-\frac12\p_x\mathcal{L}_{c^\epsilon}\eta^\epsilon, \vp_{c_0})-\epsilon (f(\ee), (1-\p_x^2)\vp_{c_0})
\end{array}
\right )}.
\end{align}

\begin{proof}
Taking inner product of Eq. \eqref{me1} with $(1-\p_x^2)\vp_{c_0}$ and  $(1-\p_x^2)\p_x\vp_{c_0}$ respectively,
and making the orthogonality conditions \eqref{orth1}, we have
\begin{align*}
&0=d(\eta^\epsilon, (1-\p_x^2)\vp_{c_0})=(d\eta^\epsilon, (1-\p_x^2)\vp_{c_0})\nonumber\\
=&(\p_x\vp_{c^\epsilon}, (1-\p_x^2)\vp_{c_0}) y^\epsilon dt-(\p_c\vp_{c^\epsilon}, (1-\p_x^2)\vp_{c_0})a^\epsilon dt
+
(\frac12\p_x\mathcal{L}_{c^\epsilon}\eta^\epsilon, \vp_{c_0})dt \\
&
+\epsilon (f(\ee),  (1-\p_x^2)\vp_{c_0})dt -(\p_c\vp_{c^\epsilon}, (1-\p_x^2)\vp_{c_0})b^\epsilon\diamond dL(t)\nonumber\\
&
+[ \sigma\p_x\vp_{c^\epsilon}+\epsilon\sigma\ee_x, (1-\p_x^2)\vp_{c_0})
+(\p_x\vp_{c^\epsilon}, (1-\p_x^2)\vp_{c_0})  \mu^\epsilon ]\diamond dL(t),
\end{align*}
and
\begin{align*}
0=&d(\eta^\epsilon, (1-\p_x^2)\p_x\vp_{c_0})=(d\eta^\epsilon, (1-\p_x^2)\p_x\vp_{c_0})\nonumber\\
=&(\p_x\vp_{c^\epsilon}, (1-\p_x^2)\p_x\vp_{c_0}) y^\epsilon dt-(\p_c\vp_{c^\epsilon}, (1-\p_x^2)\p_x\vp_{c_0})a^\epsilon dt
+
(\frac12\p_x\mathcal{L}_{c^\epsilon}\eta^\epsilon, \p_x\vp_{c_0})\nonumber\\
&
+\epsilon (f(\ee), (1-\p_x^2)\p_x\vp_{c_0})dt
+[ \sigma\p_x\vp_{c^\epsilon}+\epsilon\sigma\ee_x, (1-\p_x^2)\p_x\vp_{c_0})\\
&
+(\p_x\vp_{c^\epsilon}+\eps\ee_x, (1-\p_x^2)\p_x\vp_{c_0})  \mu^\epsilon ]\diamond dL(t)-(\p_c\vp_{c^\epsilon}, (1-\p_x^2)\p_x\vp_{c_0})b^\epsilon\diamond dL(t).
\end{align*}
Then, letting both the drift and martingale part of the above equations are identically equal to zero, we get \eqref{me7}-\eqref{me8}.
\end{proof}

\end{lemma}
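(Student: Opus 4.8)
The plan is to prove Lemma~3.2 by computing the Itô differentials of the two orthogonality functionals $(\eta^\epsilon,(1-\p_x^2)\vp_{c_0})$ and $(\eta^\epsilon,(1-\p_x^2)\p_x\vp_{c_0})$ and exploiting that they vanish identically. Since $\vp_{c_0}$ is fixed (it does not depend on $t$), the functionals $w\mapsto(w,(1-\p_x^2)\vp_{c_0})$ and $w\mapsto(w,(1-\p_x^2)\p_x\vp_{c_0})$ are bounded linear maps on $H^{-1}$, so one may apply them term by term to the evolution equation \eqref{me1} for $\eta^\epsilon$ established in Lemma~3.1. Concretely, I would first pair \eqref{me1} with $(1-\p_x^2)\vp_{c_0}$: the left side is $d(\eta^\epsilon,(1-\p_x^2)\vp_{c_0})\equiv 0$ by the orthogonality conditions \eqref{orth1}, and the right side splits into a $dt$-part and a $\diamond dL(t)$-part, each a scalar semimartingale coefficient times the test function integrated against $(1-\p_x^2)\vp_{c_0}$. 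Doing the same with $(1-\p_x^2)\p_x\vp_{c_0}$ yields a second identity of the same shape.

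Next I would \emph{separate the drift and the jump (martingale) parts}. The key observation is that the decomposition of a semimartingale into its predictable finite-variation part and its local-martingale part is unique; since both orthogonality quantities are the zero process, the drift coefficient and the integrand against $\tilde{\mathcal N}$ must each vanish identically $\m P\otimes dt$-a.e. (for $t\le\tau_\alpha^\epsilon$). Setting the $\diamond dL(t)$-coefficient to zero in both pairings produces exactly the linear system $A^\epsilon(t)B^\epsilon(t)=D^\epsilon(t)$ of \eqref{me7}: the first row comes from pairing with $(1-\p_x^2)\p_x\vp_{c_0}$ and collecting the coefficients of $\mu^\epsilon$ and $b^\epsilon$ (giving the entries $(\p_x\vp_{c^\epsilon}+\epsilon\ee_x,(1-\p_x^2)\p_x\vp_{c_0})$ and $-(\p_c\vp_{c^\epsilon},(1-\p_x^2)\p_x\vp_{c_0})$), while the inhomogeneous term $D^\epsilon(t)$ collects the terms $(\sigma\p_x\vp_{c^\epsilon}+\epsilon\sigma\ee_x,\cdot)$ that do not involve the unknown parameters. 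The second row is the analogous pairing with $(1-\p_x^2)\vp_{c_0}$; note that in this row the coefficient of $\mu^\epsilon$ reduces to $(\p_x\vp_{c^\epsilon},(1-\p_x^2)\vp_{c_0})$ because the $\epsilon\ee_x\mu^\epsilon$ contribution pairs against $(1-\p_x^2)\vp_{c_0}$ — here one should be careful to record precisely which $\epsilon\ee_x$ terms survive against which test function, matching the explicit form of $A^\epsilon$ in \eqref{me9}. Setting the drift coefficient to zero in both pairings gives $A^\epsilon(t)Y^\epsilon(t)=E^\epsilon(t)$ of \eqref{me8}, with $E^\epsilon(t)$ assembling the $-\tfrac12\p_x\mathcal L_{c^\epsilon}\eta^\epsilon$ and $-\epsilon f(\ee)$ terms and the left side assembling the coefficients of $y^\epsilon$ and $a^\epsilon$; the matrix $A^\epsilon(t)$ is the same because the coefficients of $(y^\epsilon,a^\epsilon)$ in the drift coincide with those of $(\mu^\epsilon,b^\epsilon)$ in the martingale part, both coming from $y^\epsilon\p_x\vp_{c^\epsilon}-a^\epsilon\p_c\vp_{c^\epsilon}$ and $\mu^\epsilon\p_x\vp_{c^\epsilon}-b^\epsilon\p_c\vp_{c^\epsilon}$ respectively, plus the $\epsilon y^\epsilon\ee_x$ correction in the first drift row.

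The bookkeeping in the previous paragraph is essentially the whole proof; the main obstacle, such as it is, is to justify the term-by-term application of the bounded functionals and the uniqueness-of-decomposition argument in the jump setting, and to keep the $\epsilon\ee_x$ correction terms in the right matrix entries. For the first point I would invoke that $\eta^\epsilon$ is an $H^1$-valued (hence $H^{-1}$-valued) semimartingale on $[0,\tau_\alpha^\epsilon]$, as established in the proof of Theorem~\ref{tm2}, and that $(1-\p_x^2)\vp_{c_0}$ and $(1-\p_x^2)\p_x\vp_{c_0}$ lie in $H^1$, so the stochastic-integral and the Lebesgue-integral terms in \eqref{me1} can each be tested against them by (a scalar special case of) the Itô formula of Lemma~\ref{ito2}; this legitimizes writing $d(\eta^\epsilon,\psi)=(d\eta^\epsilon,\psi)$ for $\psi\in\{(1-\p_x^2)\vp_{c_0},(1-\p_x^2)\p_x\vp_{c_0}\}$. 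For the second point, the canonical decomposition of a $\mathbb R$-valued semimartingale into drift and purely-discontinuous-martingale parts is unique, so from $0=(d\eta^\epsilon,\psi)$ one reads off that both parts vanish $\m P$-a.s. for a.e.\ $t\le\tau_\alpha^\epsilon$. Collecting the coefficients of $(y^\epsilon,a^\epsilon)$ and of $(\mu^\epsilon,b^\epsilon)$ in the two resulting scalar identities then yields \eqref{me7}--\eqref{me8} with $A^\epsilon,B^\epsilon,Y^\epsilon,D^\epsilon,E^\epsilon$ exactly as in \eqref{me9}--\eqref{me11}, which completes the proof.
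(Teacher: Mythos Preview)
Your proposal is correct and follows essentially the same route as the paper: pair \eqref{me1} with $(1-\p_x^2)\vp_{c_0}$ and $(1-\p_x^2)\p_x\vp_{c_0}$, use the orthogonality conditions \eqref{orth1} to make the left sides vanish, and then separate the drift and martingale parts to read off the two linear systems. Your added justification (linearity of the pairing, uniqueness of the semimartingale decomposition) is more explicit than the paper's, but the argument is the same; the one point you leave slightly implicit is \emph{why} the $\epsilon\ee_x$ contribution disappears in the second row of $A^\epsilon$, namely that $(\ee_x,(1-\p_x^2)\vp_{c_0})=-(\ee,(1-\p_x^2)\p_x\vp_{c_0})=0$ by \eqref{orth1}.
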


\begin{lemma}\label{lm43}
Under the assumptions of Theorem \ref{th3}, there is a $\alpha_1$ such that for $\alpha\leq \alpha_1$
and $t\leq \tau_\alpha^\epsilon,$
\begin{align}
&| \mu^\epsilon(t)|+|b^\epsilon(t)|\leq C(c_0, \alpha),\label{me11}\\
&| y^\epsilon(t)|+|a^\epsilon(t)|\leq C(c_0, \alpha)\|\ee\|_{H^1}^2,\;a.s.\label{me12}
\end{align}

\begin{proof}
We may write almost surely for $t\leq \tau^\epsilon_\alpha$ that $A^\epsilon(t)= A_0+O(|c^\epsilon-c_0|+\|\epsilon\ee\|_{H^1}),$ where
\begin{align*}
A_0=
{\left( \begin{array}{ccc}
 (\p_x\vp_{c_0}, (1-\p_x^2)\p_x\vp_{c_0}) &0\\
    0 & (\p_c\vp_{c_0}, (1-\p_x^2)\vp_{c_0})
  \end{array}
\right )}
\end{align*}
and $O(|c^\epsilon-c_0|+\|\epsilon\ee\|_{H^1})$ holds uniformly in $\epsilon, t$ and $\omega$ as long as $t\leq \tau^\epsilon$.
Hence, choosing $\alpha\leq \alpha_1$ smaller, it follows that setting
\[
\tilde{A}^\epsilon(t)=A_0+1_{[0,\tau^\epsilon)}(t)(A^\epsilon(t)-A_0),
\]
the matrix $\tilde{A}^\epsilon(t)$ is invertible and
\[
\|(\tilde{A}^\epsilon(t))^{-1}\|_{\mathcal{L}(\mr^2)}\leq C(c_0, \alpha),\;a.s.
\]
Then, Eq. \eqref{me7} may be solved as
\[
B^\epsilon(t)=(\tilde{A}^\epsilon(t))^{-1}D^\epsilon(t),
\]
for $t\leq \tau^\epsilon$,  which implies
\begin{align*}
&| \mu^\epsilon(t)|+|b^\epsilon(t)|\leq C(c_0, \alpha)|D^\epsilon(t)|
\leq C(c_0, \alpha)(\|\vp_{c^\epsilon}\|_{H^1}+\|\vp_{c_0}\|_{H^3})\leq C(c_0, \alpha).
\end{align*}
Thus, \eqref{me11} is obtained. By \eqref{i5} and \eqref{me2}, for $t\leq \tau^\epsilon$
\begin{align*}
|E^\epsilon(t)|\leq C\|\ee\|_{L^2}\|\mathcal{L}_{c^\epsilon}(\p_x\vp_{c_0})\|_{L^2}+C\eps\|f(\ee)\|_{L^1}\|(1-\p_x^2)\p_x\vp_{c_0}\|_{L^\infty}\\
(1+|c^\epsilon| +\|\ee\|_{H^1}^2), \;a.s.
\end{align*}
Using \eqref{ce1} and \eqref{me11}, we can get
\[
| y^\epsilon(t)|+|a^\epsilon(t)|\leq C(c_0, \alpha)(1+\int_0^t |a^\epsilon(s)|ds+\|\ee\|_{H^1}^2),\;\; a.s.
\]
Then the Gronwall inequality implies \eqref{me12}.
\end{proof}

\end{lemma}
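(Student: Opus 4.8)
The plan is to obtain both estimates by inverting the linear algebraic systems \eqref{me7}--\eqref{me8} for $B^\epsilon=(\mu^\epsilon,b^\epsilon)^{\mathrm T}$ and $Y^\epsilon=(y^\epsilon,a^\epsilon)^{\mathrm T}$, after controlling the coefficient matrix $A^\epsilon(t)$. First I would compare $A^\epsilon(t)$ with the ``frozen'' matrix $A_0$ obtained from \eqref{me9} by setting $c^\epsilon=c_0$ and discarding the $\epsilon\eta^\epsilon$ terms. By parity ($\varphi_{c_0}$, and hence $\partial_c\varphi_{c_0}$, is even while $\partial_x\varphi_{c_0}$ is odd) the off-diagonal entries $(\partial_x\varphi_{c_0},(1-\partial_x^2)\varphi_{c_0})$ and $(\partial_c\varphi_{c_0},(1-\partial_x^2)\partial_x\varphi_{c_0})$ of $A_0$ vanish, while the diagonal entries $\|\partial_x\varphi_{c_0}\|_{H^1}^2>0$ and $(\partial_c\varphi_{c_0},(1-\partial_x^2)\varphi_{c_0})=\partial_c H_1(\varphi_c)\big|_{c_0}$ are nonzero, the second by the slope/stability condition for the Camassa--Holm soliton; this is where the hypothesis $c_0>k$ enters (compare the computation of $\partial_c Y_2$ in the proof of Theorem \ref{tm2}). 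Hence $A_0$ is invertible.

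Next, using the $C^1$-dependence and exponential decay of $\varphi_c$ together with the a priori bounds $|c^\epsilon(t)-c_0|\le\alpha$ and $\|\epsilon\eta^\epsilon(t)\|_{H^1}\le\alpha$ for $t\le\tau_\alpha^\epsilon$ supplied by Theorem \ref{tm2}, one has $A^\epsilon(t)=A_0+O(|c^\epsilon(t)-c_0|+\|\epsilon\eta^\epsilon(t)\|_{H^1})$ uniformly in $\omega$, $\epsilon$ and $t\le\tau_\alpha^\epsilon$. To keep an inverse available for all $t$ I would work with the truncation $\tilde A^\epsilon(t)=A_0+1_{[0,\tau_\alpha^\epsilon)}(t)(A^\epsilon(t)-A_0)$: for $\alpha$ below a threshold $\alpha_1$ this stays within a fixed neighbourhood of $A_0$, hence is invertible with $\|(\tilde A^\epsilon(t))^{-1}\|_{\mathcal{L}(\mathbb{R}^2)}\le C(c_0,\alpha)$ a.s., and $B^\epsilon=(\tilde A^\epsilon)^{-1}D^\epsilon$, $Y^\epsilon=(\tilde A^\epsilon)^{-1}E^\epsilon$ on $[0,\tau_\alpha^\epsilon]$. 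Then \eqref{me11} is immediate once $|D^\epsilon(t)|\le C(c_0,\alpha)$ is checked, which follows from \eqref{me10}, the Cauchy--Schwarz inequality, $\sigma\in W^{2,\infty}$, the $H^3$-smoothness and decay of $\varphi_{c_0}$ and $\varphi_{c^\epsilon}$, and $\|\epsilon\eta^\epsilon\|_{H^1}\le\alpha$.

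For \eqref{me12} the point is that $E^\epsilon(t)$ in \eqref{me11} is small. In each component I would integrate by parts in the outer $\partial_x$ and use self-adjointness of $\mathcal{L}_{c^\epsilon}$ to move the operator onto the smooth profile; the translation identity $\mathcal{L}_{c^\epsilon}\partial_x\varphi_{c^\epsilon}=0$ then lets one replace $\mathcal{L}_{c^\epsilon}\partial_x\varphi_{c_0}$ by $\mathcal{L}_{c^\epsilon}(\partial_x\varphi_{c_0}-\partial_x\varphi_{c^\epsilon})=O(|c^\epsilon-c_0|)$; the scaling identity $\mathcal{L}_{c^\epsilon}\partial_c\varphi_{c^\epsilon}=-H_1'(\varphi_{c^\epsilon})=-(1-\partial_x^2)\varphi_{c^\epsilon}$, obtained by differentiating $H_c'(\varphi_c)=0$ in $c$, combined with the orthogonality conditions \eqref{orth1}, removes the remaining $O(1)$ pieces; and the $f(\eta^\epsilon)$ contributions are already quadratic in $\eta^\epsilon$ by \eqref{me2}. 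This leads to a bound for $|E^\epsilon(t)|$ in terms of $\|\eta^\epsilon(t)\|_{H^1}^2$ plus a piece controlled, via the drift in \eqref{ce1}, by $\int_0^t|a^\epsilon(s)|\,ds$. Substituting into $Y^\epsilon=(\tilde A^\epsilon)^{-1}E^\epsilon$ and closing the resulting inequality by Gronwall's inequality in $t$ yields $|y^\epsilon(t)|+|a^\epsilon(t)|\le C(c_0,\alpha)\|\eta^\epsilon(t)\|_{H^1}^2$ a.s. for $t\le\tau_\alpha^\epsilon$.

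I expect the estimate on $E^\epsilon$ to be the main obstacle: the straightforward bound on $(\partial_x\mathcal{L}_{c^\epsilon}\eta^\epsilon,\cdot)$ is only first order in $\eta^\epsilon$, and sharpening it requires simultaneously exploiting the self-adjointness of $\mathcal{L}_{c^\epsilon}$, its explicit (generalized) kernel $\{\partial_x\varphi_c,\partial_c\varphi_c\}$, both orthogonality conditions \eqref{orth1}, and the quadratic structure of $f$ in \eqref{me2}. By comparison, the invertibility of the frozen matrix, the truncation device, the bound on $D^\epsilon$, and the concluding Gronwall argument (which absorbs the implicit dependence on $a^\epsilon$ through the $c^\epsilon$-drift) are routine.
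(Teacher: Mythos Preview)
Your overall strategy---compare $A^\epsilon$ with the diagonal frozen matrix $A_0$ (using parity for the off-diagonal zeros), use the truncation $\tilde A^\epsilon$ to guarantee a uniformly bounded inverse up to $\tau_\alpha^\epsilon$, read off \eqref{me11} from a direct bound on $D^\epsilon$, then bound $E^\epsilon$ and close with Gronwall---is exactly the paper's. The one substantive difference is how $E^\epsilon$ is estimated. The paper does \emph{not} invoke the kernel identities $\mathcal{L}_c\partial_x\varphi_c=0$, $\mathcal{L}_c\partial_c\varphi_c=-(1-\partial_x^2)\varphi_c$, nor the orthogonality conditions \eqref{orth1}, at this stage: it simply integrates by parts and uses self-adjointness once to move $\mathcal{L}_{c^\epsilon}$ onto the smooth profile, bounds the result crudely by $C\|\eta^\epsilon\|_{L^2}\|\mathcal{L}_{c^\epsilon}\partial_x\varphi_{c_0}\|_{L^2}$, adds the $\epsilon f(\eta^\epsilon)$ contribution, appends a harmless factor $(1+|c^\epsilon|+\|\eta^\epsilon\|_{H^1}^2)$, and feeds $|c^\epsilon|$ through \eqref{ce1} and the already-proved bound \eqref{me11} to produce the $\int_0^t|a^\epsilon(s)|\,ds$ term before Gronwall. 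So the step you single out as the ``main obstacle'' is precisely the step the paper bypasses. Two remarks: first, your refined mechanism does not apply cleanly to the \emph{first} component of $E^\epsilon$, since after moving the operator one lands on $\mathcal{L}_{c^\epsilon}\partial_x^2\varphi_{c_0}$, which is not hit by either generalized-kernel identity; second, the paper's crude route, read literally, yields $|y^\epsilon|+|a^\epsilon|\le C(c_0,\alpha)(1+\|\eta^\epsilon\|_{H^1}^2)$ after Gronwall rather than the pure quadratic form stated in \eqref{me12}---but this is all that is actually used downstream (cf.\ the bounds on $J_2$ in Lemma~\ref{lemma51} and on $K_3+K_4$ in the proof of Theorem~\ref{th3}).
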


\begin{corollary}\label{co41}
Under the assumptions of Theorem \ref{th3}, for  $t\leq \tau^\epsilon_\alpha$, we have
\begin{align}\label{me11a}
 |c^\epsilon-c_0|\leq C\epsilon (\int_0^T\|\ee\|_{H^1}^2ds+1),\;\; \hbox{a.s.}
\end{align}

\begin{proof}
Since
\[
c^\epsilon-c_0=\epsilon\int_0^t a^\epsilon(s) ds+\epsilon\int_0^t  b^\epsilon(t)\diamond dL(t),
\]
\eqref{me11a} is obtained by Lemma \ref{lm43}.
\end{proof}
\end{corollary}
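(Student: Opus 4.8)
The plan is to integrate the frequency equation \eqref{ce1} supplied by Theorem \ref{th3} and to estimate the drift term and the Marcus integral separately, using the a priori bounds of Lemma \ref{lm43}.

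First I would observe that, by the construction in the proof of Theorem \ref{tm2}, one has $c^\epsilon(0)=c(u^\epsilon(0))=c(\vp_{c_0})=c_0$, because $Y(c_0,0,\vp_{c_0})=(0,0)$ and $c(\cdot)$ is the implicit function determined there. Integrating \eqref{ce1} from $0$ to $t\le\tau_\alpha^\epsilon$ then gives
\[
c^\epsilon(t)-c_0=\epsilon\int_0^t a^\epsilon(s)\,ds+\epsilon\int_0^t b^\epsilon(s)\diamond dL(s).
\]
For the drift, Lemma \ref{lm43} gives $|a^\epsilon(s)|\le C(c_0,\alpha)\|\ee(s)\|_{H^1}^2$ for $s\le\tau_\alpha^\epsilon$, hence $|\epsilon\int_0^t a^\epsilon(s)\,ds|\le C\epsilon\int_0^T\|\ee(s)\|_{H^1}^2\,ds$. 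For the stochastic term I would use the uniform bound $|b^\epsilon(s)|\le C(c_0,\alpha)$ for $s\le\tau_\alpha^\epsilon$, also from Lemma \ref{lm43}: rewriting the Marcus integral $\int_0^t b^\epsilon\diamond dL$ in its canonical compensated form (in the spirit of \eqref{d2-cha}), both the compensated small-jump part and the associated compensator are controlled by $\|b^\epsilon\|_{L^\infty}$ together with $\int_{\m{Z}}|z|\,\vartheta(dz)$; the latter is finite since $\m{Z}=\{|z|\le1\}$ and $\int_{\m{Z}}e^{2\|\sigma_x\|_{L^\infty}|z|}\vartheta(dz)<\infty$ force $\vartheta$ to be a finite measure, so $L$ has a.s.\ finite variation on $[0,T]$. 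Consequently $|\epsilon\int_0^t b^\epsilon\diamond dL|\le C\epsilon$ a.s.\ on $[0,\tau_\alpha^\epsilon]$, and combining the two estimates, the last being absorbed into the term $C\epsilon\cdot 1$, yields \eqref{me11a}.

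The only step that deserves any attention is the Marcus integral: one must check that the Marcus correction attached to $b^\epsilon\diamond dL$ does not upset the order-$\epsilon$ scaling. This is immediate here because $b^\epsilon$ enters \eqref{ce1} linearly and is bounded up to the stopping time, so the correction remains $O(\epsilon)$; everything else is a direct consequence of Lemma \ref{lm43}. In particular, no spectral information on $\mathcal{L}_{c_0}$ is needed: the corollary is simply the integrated form of the already-derived equation for $c^\epsilon$.
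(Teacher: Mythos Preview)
Your proposal is correct and follows exactly the paper's approach: write $c^\epsilon(t)-c_0=\epsilon\int_0^t a^\epsilon\,ds+\epsilon\int_0^t b^\epsilon\diamond dL$ from \eqref{ce1} and invoke the bounds of Lemma~\ref{lm43} on $a^\epsilon$ and $b^\epsilon$. The paper's own proof is two lines and does not justify the Marcus-integral bound at all; your additional remark that the exponential moment hypothesis \eqref{i5} forces $\vartheta(\m Z)<\infty$ (hence $L$ has a.s.\ finite variation on $[0,T]$) supplies precisely the detail the paper omits.
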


\smallskip

\section{Estimates on the remainder term and convergence}\label{s4}

In this section, we prove the convergence of $\ee$. Firstly, we give some estimates of $\ee,  y^\epsilon,  \mu^\epsilon, a^\epsilon$ and $b^\epsilon$
in the following Lemmas \ref{lemma51}-\ref{lemma52}.

\begin{lemma}\label{lemma51}
Let $T>0$ be fixed. Under the assumption of Theorem \ref{th3}, we have
\begin{align}
&\m{E}\sup_{t\in[0,T\wedge \tau^\epsilon_\alpha]}\|\ee\|_{H^1}^{2r}\leq C,\;\; r=1, 2\label{ec6a}.
\end{align}

\begin{proof}
Let $y(t,x)=\Phi_1(t,z,v)$ solves the following partial differential equation
\[
\frac{dy}{dt}=z(\sigma\p_x\vp_{c^\epsilon}+\p_x\vp_{c^\epsilon}  \mu^\epsilon)-z\p_c\vp_{c^\epsilon}b^\epsilon
+\epsilon z(\sigma + \mu^\epsilon)y_x,
\]
with initial value $y(0,x)=v(x).$
Then, using integrations by parts, H\"older inequality and \eqref{me11}, we have
\begin{align*}
&\|y(t,x)\|_{H^1}^2=\|y(0,x)\|_{H^1}^2+2z\int_0^t((\sigma\p_x\vp_{c^\epsilon}+\p_x\vp_{c^\epsilon}  \mu^\epsilon)-\p_c\vp_{c^\epsilon}b^\epsilon, (1-\p_x^2)y)ds\\
&+2\eps z\int_0^t((\sigma + \mu^\epsilon)y_x, (1-\p_x^2)y)ds\\
\leq&\|v(x)\|_{H^1}^2+C|z|\int_0^t(\|\sigma_x\|_{L^\infty}\|\p_x\vp_{c^\epsilon}\|_{H^1}+\|\p_x\vp_{c^\epsilon}\|_{H^1} |\mu^\epsilon|
+\|\p_c\vp_{c^\epsilon}\|_{H^1}|b^\epsilon|)\|y\|_{H^1}ds\\
&+C\eps |z|\int_0^t\|\sigma_x\|_{L^\infty}\|y(t,x)\|_{H^1}^2ds\\
\leq&\|v(x)\|_{H^1}^2+C|z|\int_0^t(1+\|y\|_{H^1}^2)ds.
\end{align*}
The Gronwall's inequality implies
\begin{align*}
\|y(t,x)\|_{H^1}^2\leq(\|v(x)\|_{H^1}^2+C|z|t)e^{C|z|t}.
\end{align*}
Similar to the estimate \eqref{lmh12-1}, using integrations by parts, H\"older inequality and \eqref{me11}, we have
\begin{align}\label{lemma51a-1}
&H_1(\Phi_1(1,z,\ee))-H_1(\ee)=\int_0^1H'_1(y(r)) y'(r)dr\nonumber\\
=&z\int_0^1(y-y_{xx}, \sigma\p_x\vp_{c^\epsilon}+\p_x\vp_{c^\epsilon}  \mu^\epsilon-\p_c\vp_{c^\epsilon}b^\epsilon
+\epsilon (\sigma + \mu^\epsilon)y_x)(r)dr\nonumber\\
\leq&C |z|\int_0^1(\|y(r)\|_{H^1}+\|y(r)\|_{H^1}^2)dr\nonumber\\
\leq& C|z|\|\ee(s,x)\|_{H^1}^2\int_0^1e^{C|z|r}dr+C|z|^2\int_0^1 re^{C|z|r}dr\nonumber\\
\leq& C|z|(\|\ee(s,x)\|_{H^1}^2+1).
\end{align}

Using It\^o formula Lemma \ref{ito2} to $\|\ee\|_{H^1}^2$ of \eqref{me1}, we have
\begin{align}\label{lemma51-1}
\|\ee\|_{H^1}^2
=&\int_0^t((1-\p_x^2)^{-1}\p_x\mathcal{L}_{c^\epsilon}\ee, (1-\p^2_x)\ee)ds\nonumber\\
&+2\int_0^t(( y^\epsilon \p_x\vp_{c^\epsilon}-\p_c\vp_{c^\epsilon}a^\epsilon), (1-\p^2_x)\ee)ds
\nonumber\\
&+2\int_0^t(\epsilon y^\epsilon \p_x\ee +\epsilon f(\ee), (1-\p^2_x)\ee)ds\nonumber\\&
+\int_0^t\int_{\m{Z}}
[\|\Phi_1(1,z,\ee(s-))\|_{H^1}^2-\|\ee(s-)\|_{H^1}^2]\tilde{\mathcal{N}}(ds,dz)\nonumber\\
&+\int_0^t\int_{\m{Z}}
[\|\Phi_1(1,z,\ee(s))\|_{H^1}^2-\|\ee\|_{H^1}^2\nonumber\\
&-2z((1-\p^2_x)\ee, (\sigma\p_x\vp_{c^\epsilon}+\p_x\vp_{c^\epsilon}  \mu^\epsilon)-\p_c\vp_{c^\epsilon}b^\epsilon
+\epsilon (\sigma\ee_x+\ee_x\mu^\epsilon))]\vartheta(dz)ds\nonumber\\
=:&\sum_{i=1}^5J_i.
\end{align}
Using \eqref{i5}, integration by parts, Cauchy inequality and embedding theorem, we have
\begin{align}\label{lemma51-2}
J_1=&\int_0^t(\p_x\mathcal{L}_{c^\epsilon}\ee, \ee)ds\nonumber\\
=&2\int_0^t(\p_x^2((c^\epsilon-\vp_{c^\epsilon})\p_x\ee), \ee)ds
+2\int_0^t(\p_x((-3\vp_{c^\epsilon}+\p_x^2\vp_{c^\epsilon}+(c^\epsilon-2k))\ee), \ee)ds\nonumber\\
=&\int_0^t(\p_x\vp_{c^\epsilon}, (\p_x\ee)^2)ds
-2\int_0^t((-3\vp_{c^\epsilon}+\p_x^2\vp_{c^\epsilon})\ee, \p_x\ee)ds\nonumber\\
\leq&C\int_0^t\|\ee\|_{H^1}^2ds.
\end{align}
Lemma \ref{lm43} and Cauchy inequality yield
\begin{align}\label{lemma51-3}
J_2\leq C\int_0^t\|\ee\|_{H^1}^2ds.
\end{align}
Using integration by parts, we obtain
\begin{align}\label{lemma51-4}
J_3=&2\epsilon\int_0^t( (1-\p^2_x)f(\ee), \ee)ds\nonumber\\
=&2\epsilon\int_0^t(3\ee\p_x\ee-2\p_x\ee\p_x^2\ee-\ee\p_x^3\ee, \ee)ds=0.
\end{align}
By BDG inequality and \eqref{lemma51a-1}, we have
\begin{align}\label{lemma51-5}
\m{E}\sup_{t\in[0,T\wedge \tau^\epsilon_\alpha]}J_4
\leq &C\m{E}\int_0^T\left(\int_{\m{Z}}
|z|^2(\|\ee(s,x)\|_{H^1}^2+1)^2\vartheta(dz)\right)^{1/2}ds\nonumber\\
\leq &\frac12\me\sup_{t\in[0,T]}\|\ee\|_{H^1}^2+C\m{E}\int_0^T(\|\ee\|_{H^1}^2+1)ds.
\end{align}
Similarly, we have
\begin{align}\label{lemma51-6}
&\m{E}\sup_{t\in[0,T\wedge \tau^\epsilon_\alpha]}J_5
\leq C(1+\m{E}\int_0^T \|\ee\|_{H^1}^2ds).
\end{align}
It follows from \eqref{lemma51-1}-\eqref{lemma51-6} that
\begin{align*}
\m{E}\sup_{t\in[0,T\wedge \tau^\epsilon_\alpha]}\|\ee\|_{H^1}^2\leq&C(1+\m{E}\int_0^T \|\ee\|_{H^1}^2ds),
\end{align*}
from which and Gronwall inequality implies
\begin{align*}
\m{E}\sup_{t\in[0,T\wedge \tau^\epsilon_\alpha]}\|\ee\|_{H^1}^2\leq&C.
\end{align*}

From \eqref{lemma51-1}, we have
\begin{align}\label{lemma51-7}
\|\ee\|_{H^1}^4\leq \sum_{i=1}^5|J_i|^2.
\end{align}
Then, using H\"older inequality and the estimates of \eqref{lemma51-2}-\eqref{lemma51-6}, we can obtain
\begin{align*}
\m{E}\sup_{t\in[0,T\wedge \tau^\epsilon_\alpha]}\|\ee\|_{H^1}^4\leq&C(1+\m{E}\int_0^{T\wedge \tau^\epsilon_\alpha} \|\ee\|_{H^1}^4ds),
\end{align*}
from which and Gronwall inequality implies
\begin{align*}
\m{E}\sup_{t\in[0,T\wedge \tau^\epsilon_\alpha]}\|\ee\|_{H^1}^4\leq&C.
\end{align*}
The proof is complete.
\end{proof}

\end{lemma}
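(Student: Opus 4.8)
The plan is to apply the It\^o formula of Lemma \ref{ito2} to the functional $v\mapsto\|v\|_{H^1}^2=(v,(1-\p_x^2)v)$ along the process $\ee$, which solves the modulation equation \eqref{me1}, and then to close the resulting estimate by Gronwall's inequality. The first step is to identify and control the jump flow map associated with the Marcus term of \eqref{me1}: let $y(t)=\Phi_1(t,z,v)$ solve
\[
\frac{dy}{dt}=z\big(\sigma\p_x\vp_{c^\eps}+\p_x\vp_{c^\eps}\mu^\eps-\p_c\vp_{c^\eps}b^\eps\big)+\eps z(\sigma+\mu^\eps)y_x,\qquad y(0)=v.
\]
Differentiating $\|y(t)\|_{H^1}^2$, integrating by parts in the transport term $\eps z(\sigma+\mu^\eps)y_x$, and using the smoothness of $\vp_{c^\eps}$ together with the bound $|\mu^\eps|+|b^\eps|\le C(c_0,\alpha)$ of Lemma \ref{lm43}, one gets $\frac{d}{dt}\|y\|_{H^1}^2\le C|z|(1+\|y\|_{H^1}^2)$, hence by Gronwall $\|\Phi_1(t,z,v)\|_{H^1}^2\le(\|v\|_{H^1}^2+C|z|t)e^{C|z|t}$.

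With the flow under control, the jump increments of $H_1$ are estimated by the mean value theorem exactly as in the computation \eqref{lmh12-1}:
\[
H_1(\Phi_1(1,z,\ee))-H_1(\ee)=\int_0^1 H_1'(y(r))\,y'(r)\,dr\le C|z|\big(\|\ee\|_{H^1}^2+1\big),
\]
which is integrable against $\vartheta$ thanks to the second-moment and exponential-integrability hypotheses on $\vartheta$. Feeding \eqref{me1} into Lemma \ref{ito2} then produces an identity $\|\ee\|_{H^1}^2=\sum_{i=1}^5 J_i$, in which $J_1=\int_0^t(\p_x\mathcal{L}_{c^\eps}\ee,\ee)\,ds$ is the linear drift, $J_2=2\int_0^t(y^\eps\p_x\vp_{c^\eps}-\p_c\vp_{c^\eps}a^\eps,(1-\p_x^2)\ee)\,ds$ the modulation drift, $J_3$ the $\eps$-order drift carrying $\ee_x$ and $f(\ee)$ from \eqref{me2}, $J_4$ the compensated Poisson integral, and $J_5$ the corresponding compensator.

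The estimates are then carried out term by term. For $J_1$, two integrations by parts against $\mathcal{L}_{c^\eps}$ together with the skew-adjointness of $\p_x$ make the top-order contribution cancel, leaving $J_1\le C\int_0^t\|\ee\|_{H^1}^2\,ds$. For $J_2$, the estimate $|y^\eps|+|a^\eps|\le C(c_0,\alpha)\|\ee\|_{H^1}^2$ of Lemma \ref{lm43} and the Cauchy-Schwarz inequality give $J_2\le C\int_0^t\|\ee\|_{H^1}^2\,ds$. The term $J_3$ vanishes identically, since $(\p_x\ee,(1-\p_x^2)\ee)=0$ and $((1-\p_x^2)f(\ee),\ee)=(3\ee\p_x\ee-2\p_x\ee\p_x^2\ee-\ee\p_x^3\ee,\ee)=0$ after integration by parts; this reflects the Hamiltonian/conservation structure and is precisely what keeps the quadratic remainder $f(\ee)$ from costing a derivative. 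For $J_4$ I would use the Burkh\"older-Davis-Gundy inequality together with the increment bound above to obtain
\[
\me\sup_{t\le T\wedge\tau_\alpha^\eps}|J_4|\le\frac12\,\me\sup_{t\le T\wedge\tau_\alpha^\eps}\|\ee\|_{H^1}^2+C\,\me\int_0^{T\wedge\tau_\alpha^\eps}\big(\|\ee\|_{H^1}^2+1\big)\,ds,
\]
absorbing the supremum term into the left-hand side; $J_5$ is treated the same way using $\int_{\m{Z}}z^2\,\vartheta(dz)<\infty$. Collecting the estimates yields $\me\sup_{t\le T\wedge\tau_\alpha^\eps}\|\ee\|_{H^1}^2\le C\big(1+\me\int_0^{T\wedge\tau_\alpha^\eps}\|\ee\|_{H^1}^2\,ds\big)$, so Gronwall's inequality gives the case $r=1$. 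For $r=2$, squaring the identity gives $\|\ee\|_{H^1}^4\le\sum_{i=1}^5|J_i|^2$; bounding each $|J_i|^2$ by the same devices (H\"older for the drift terms, and BDG in the form producing $\|\ee\|_{H^1}^4$ for the stochastic terms) leads to $\me\sup_{t\le T\wedge\tau_\alpha^\eps}\|\ee\|_{H^1}^4\le C(1+\me\int_0^{T\wedge\tau_\alpha^\eps}\|\ee\|_{H^1}^4\,ds)$, and Gronwall again closes the argument.

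The step I expect to be the main obstacle is the jump term: one must verify that the flow $\Phi_1$ has $H^1$-growth that is controlled uniformly in the modulation quantities $\mu^\eps,b^\eps$ appearing in \eqref{me1} — which is exactly where the a priori bound of Lemma \ref{lm43} is indispensable — and that the resulting increments are $\vartheta$-integrable, for which the standing hypotheses on $\vartheta$ are tailored. The other delicate point, purely algebraic, is the exact cancellation $J_3=0$: without it the $H^1$ Gronwall loop would fail to close.
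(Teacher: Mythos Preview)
Your proposal is correct and follows the paper's proof essentially step for step: the same Marcus flow $\Phi_1$ with the same Gronwall bound, the same mean-value increment estimate \`a la \eqref{lmh12-1}, the same It\^o decomposition $\|\ee\|_{H^1}^2=\sum_{i=1}^5 J_i$, the same handling of each $J_i$ (including the exact cancellation $J_3=0$ and the BDG splitting for $J_4$), and the same squaring-plus-Gronwall argument for $r=2$. There is no substantive difference to report.
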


\begin{lemma}\label{lemma52}
Let the adapted processes $ y^\epsilon,  \mu^\epsilon, a^\epsilon, b^\epsilon,  y, \mu, a$ and $b$ be given in \eqref{xe1}-\eqref{ce1} and \eqref{eta1}-\eqref{eta1x}.
Then, for any positive $T$ and $t\leq \tau^\epsilon_\alpha$
\begin{align}
  & | y^\epsilon(t)-  y(t)|+  |\mu^\epsilon(t)-  \mu(t)|+  |a^\epsilon(t)-a(t)|+   |b^\epsilon(t)-b(t)|\nonumber\\
  \leq& C\|\ee(t)-\eta(t)\|_{L^2}+C\epsilon (1+\sup_{t\in[0,T\wedge \tau^\epsilon_\alpha]}\|\ee(t)\|_{H^1}^4)\label{ec6}.
\end{align}

\begin{proof}
Denote
\begin{align}\label{ec2}
A_0^{-1}=
{\left( \begin{array}{ccc}
 (\p_x\vp_{c_0}, (1-\p_x^2)\p_x\vp_{c_0})^{-1} &0\\
    0 & (\p_c\vp_{c_0}, (1-\p_x^2)\vp_{c_0})^{-1}
  \end{array}
\right )}
\end{align}
\begin{align}\label{ec3}
D=
{\left( \begin{array}{ccc}
   -(\sigma\p_x\vp_{c_0}, (1-\p_x^2)\p_x\vp_{c_0}))\\
   -(\sigma\p_x\vp_{c_0}, (1-\p_x^2)\vp_{c_0})
  \end{array}
\right )},
\end{align}
and
\begin{align}\label{ec4}
  E=
{\left( \begin{array}{ccc}
 (-\frac12\p_x\mathcal{L}_{c_0}\eta, \p_x\vp_{c_0})\\
    (-\frac12\p_x\mathcal{L}_{c_0}\eta, \vp_{c_0})
\end{array}
\right )}.
\end{align}
Then
\begin{align}\label{ec1}
  A_0^{-1}E=
{\left( \begin{array}{ccc}
    y (t) \\
   a (t)
\end{array}
\right )}:=Y (t)
  \;\;\hbox{and}\;\;
A_0^{-1}D=
{\left( \begin{array}{ccc}
   \mu(t) \\
  b(t)
  \end{array}
\right )}.
\end{align}

It follows from \eqref{me8} and \eqref{ec1} that
\begin{align}\label{ec5}
  Y^\epsilon-Y =&
{\left( \begin{array}{ccc}
   y^\epsilon- y \nonumber\\
   a^\epsilon-a
\end{array}
\right )}=(\tilde{A}^\epsilon)^{-1}E^\epsilon- A_0^{-1}E\nonumber\\
=&((\tilde{A}^\epsilon)^{-1}-A_0^{-1})E^\epsilon+A_0^{-1}(E^\epsilon-E)\nonumber\\
=&(\tilde{A}^\epsilon)^{-1}(A_0-\tilde{A}^\epsilon)A_0^{-1}E^\epsilon+A_0^{-1}(E^\epsilon-E).
\end{align}
Let $v^\epsilon=\ee-\eta.$ Then, using \eqref{ec10}-\eqref{ecg}, we get
\begin{align}\label{ec4}
  E^\epsilon-E=&
{\left( \begin{array}{ccc}
 (-\frac12\p_x\mathcal{L}_{c^\epsilon}\eta^\epsilon+\frac12\p_x\mathcal{L}_{c_0}\eta, \p_x\vp_{c_0})-\epsilon (f(\ee), (1-\p_x^2)\p_x\vp_{c_0})\\
    (-\frac12\p_x\mathcal{L}_{c^\epsilon}\eta^\epsilon+\frac12\p_x\mathcal{L}_{c_0}\eta, \vp_{c_0})-\epsilon (f(\ee), (1-\p_x^2)\vp_{c_0})
\end{array}
\right )}\nonumber\\
= &
{\left( \begin{array}{ccc}
 (-\frac12\p_x\mathcal{L}_{c_0}v^\eps-\frac12\p_xg(\ee), \p_x\vp_{c_0})-\epsilon (f(\ee), (1-\p_x^2)\p_x\vp_{c_0})\\
    (-\frac12\p_x\mathcal{L}_{c_0}v^\eps-\frac12\p_xg(\ee), \vp_{c_0})-\epsilon (f(\ee), (1-\p_x^2)\vp_{c_0})
\end{array}
\right )}
\end{align}
Similar to \eqref{ec13-3}, \eqref{ec13-4} and \eqref{ec14}, we obtain
\begin{align}\label{ec7}
|E^\epsilon-E|\leq C\epsilon(1+\sup_{t\in[0,T\wedge\tau_\alpha^\epsilon]} \|\ee\|_{H^1}^4)+C \|v^\epsilon\|_{L^2}^2.
\end{align}
Since $A_0-\tilde{A}^\epsilon=O(|c^\epsilon-c_0|+\|\epsilon\ee\|_{H^1})=\epsilon O(\|\ee\|_{H^1})$, it follows from \eqref{ec5}-\eqref{ec7} that
\begin{align}\label{ec5z}
  |y^\epsilon- y|+  | a^\epsilon-a|\leq C\|v^\epsilon\|_{L^2}^2+C\epsilon (1+\sup_{t\in[0,T\wedge\tau_\alpha^\epsilon]}\|\ee\|_{H^1}^4).
\end{align}

Now, we estimate \eqref{ec6a}. Similar to \eqref{ec5},
\begin{align*}
  {\left( \begin{array}{ccc}
    \mu^\epsilon-\mu \nonumber\\
   b^\epsilon-b
\end{array}
\right )}
=&(\tilde{A}^\epsilon)^{-1}(A_0-\tilde{A}^\epsilon)A_0^{-1}D^\epsilon+A_0^{-1}(D^\epsilon-D).
\end{align*}
Since
\begin{align}
D^\epsilon-D=
{\left( \begin{array}{ccc}
   -(\sigma\p_x(\vp_{c^\epsilon}-\vp_{c_0})+\eps\sigma\ee_x, (1-\p_x^2)\p_x\vp_{c_0})\\
   -(\sigma\p_x(\vp_{c^\epsilon}-\vp_{c_0})+\eps\sigma\ee_x, (1-\p_x^2)\vp_{c_0})
  \end{array}
\right )},
\end{align}
we have, from \eqref{me11a} and H\"older inequality,
 \begin{align}
|D^\epsilon-D| \leq C\epsilon  \|\ee\|_{H^1}.
\end{align}
Thus, we can get \eqref{ec6a} as that of \eqref{ec6}. The proof is complete.
\end{proof}

\end{lemma}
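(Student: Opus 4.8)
The plan is to exploit that both the perturbed tuple $(y^\epsilon,a^\epsilon,\mu^\epsilon,b^\epsilon)$ and the limiting tuple $(y,a,\mu,b)$ solve the \emph{same} two $2\times2$ linear algebraic systems \eqref{me7}--\eqref{me8}, only with the matrix and right-hand sides $A^\epsilon,E^\epsilon,D^\epsilon$ replaced by their $\eps\to0$ limits $A_0,E,D$. Hence the lemma reduces to stability of a linear system under perturbation of its data. Following the identity \eqref{ec5}, I would write
\[
Y^\epsilon-Y=(\tilde A^\epsilon)^{-1}(A_0-\tilde A^\epsilon)A_0^{-1}E^\epsilon+A_0^{-1}(E^\epsilon-E),
\]
together with the analogous identity for $(\mu^\epsilon-\mu,b^\epsilon-b)$ obtained by replacing $E,E^\epsilon$ with $D,D^\epsilon$. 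Since Lemma~\ref{lm43} gives $\|(\tilde A^\epsilon)^{-1}\|_{\mathcal{L}(\mr^2)}\le C(c_0,\alpha)$ and since $E^\epsilon,D^\epsilon$ are bounded for $t\le\tau^\epsilon_\alpha$, the first summand is controlled by $\|A_0-\tilde A^\epsilon\|$, and the problem splits into three perturbation estimates: the matrix term, the difference $E^\epsilon-E$, and the difference $D^\epsilon-D$.

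For the matrix term I would read off from \eqref{me9} that the entries of $A^\epsilon$ differ from those of $A_0$ only through $\p_x\vp_{c^\epsilon}-\p_x\vp_{c_0}$, $\p_c\vp_{c^\epsilon}-\p_c\vp_{c_0}$, and the single $\eps\ee_x$ entry; by smoothness of the profile each profile difference is $O(|c^\epsilon-c_0|)$, so $A_0-\tilde A^\epsilon=O(|c^\epsilon-c_0|+\|\eps\ee\|_{H^1})$. Inserting Corollary~\ref{co41}, which yields $|c^\epsilon-c_0|\le C\eps(\int_0^T\|\ee\|_{H^1}^2\,ds+1)$, turns this into $\eps\,O(1+\sup_{[0,T\wedge\tau^\epsilon_\alpha]}\|\ee\|_{H^1}^2)$, so the matrix term feeds only into the $C\eps(1+\sup\|\ee\|_{H^1}^4)$ part of \eqref{ec6}.

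The heart of the argument is $E^\epsilon-E$. Setting $v^\epsilon=\ee-\eta$ and decomposing
\[
-\tfrac12\p_x\mathcal{L}_{c^\epsilon}\ee+\tfrac12\p_x\mathcal{L}_{c_0}\eta=-\tfrac12\p_x\mathcal{L}_{c_0}v^\epsilon-\tfrac12\p_x g(\ee),\qquad g(\ee):=(\mathcal{L}_{c^\epsilon}-\mathcal{L}_{c_0})\ee,
\]
I would pair each component of $E^\epsilon-E$ against the smooth profiles $\p_x\vp_{c_0}$ and $\vp_{c_0}$, integrate by parts and use self-adjointness of $\mathcal{L}_{c_0}$ so as to move \emph{all} derivatives onto these smooth test functions. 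This is the crucial manoeuvre and the main obstacle: no $H^2$ bound on $\ee$ is available uniformly in $\eps$, so $(\p_x\mathcal{L}_{c_0}v^\epsilon,\cdot)$ cannot be estimated directly; but after integration by parts the pairing becomes a plain $L^2$ inner product of $v^\epsilon$ against a fixed smooth function (schematically $(v^\epsilon,\mathcal{L}_{c_0}\p_x^2\vp_{c_0})$), giving the linear term $C\|v^\epsilon\|_{L^2}$. The operator-difference term $g(\ee)$ has coefficients of size $O(|c^\epsilon-c_0|)$, so after the same procedure it is $O(|c^\epsilon-c_0|\,\|\ee\|_{L^2})=\eps\,O(\|\ee\|_{H^1}(1+\sup\|\ee\|_{H^1}^2))$; and the explicit $\eps(f(\ee),\cdot)$ contributions are bounded via $\|f(\ee)\|_{L^1}\le C\|\ee\|_{H^1}^2$ (read off from \eqref{me2}) times the $L^\infty$ norm of the smooth factor, hence $C\eps\|\ee\|_{H^1}^2$. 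Collecting, $|E^\epsilon-E|\le C\|v^\epsilon\|_{L^2}+C\eps(1+\sup\|\ee\|_{H^1}^4)$.

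Finally $D^\epsilon-D$ is handled the same way: by \eqref{me10} and \eqref{ec3} its entries differ only through $\sigma\p_x(\vp_{c^\epsilon}-\vp_{c_0})$ and $\eps\sigma\ee_x$, so $|D^\epsilon-D|\le C(|c^\epsilon-c_0|+\eps\|\ee\|_{H^1})\le C\eps(1+\sup\|\ee\|_{H^1})$ by Corollary~\ref{co41}; here there is \emph{no} linear-$v^\epsilon$ term since $D^\epsilon$ is independent of $\eta$. Substituting the three perturbation bounds into the two matrix identities and invoking the uniform invertibility of $\tilde A^\epsilon$ from Lemma~\ref{lm43} then yields \eqref{ec6}. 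The only delicate point throughout is the one-derivative loss in the $\mathcal{L}_{c_0}$ term, which is resolved precisely by integrating every derivative onto the smooth soliton profile rather than onto the low-regularity remainder $\ee$ or its limit $\eta$.
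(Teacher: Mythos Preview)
Your proposal is correct and follows essentially the same route as the paper: the same perturbation identity \eqref{ec5}, the same splitting $\p_x\mathcal{L}_{c^\epsilon}\ee=\p_x\mathcal{L}_{c_0}\ee+g(\ee)$, and the same treatment of $D^\epsilon-D$ via Corollary~\ref{co41}. The only noticeable difference is cosmetic: after integrating by parts you obtain the linear bound $C\|v^\epsilon\|_{L^2}$ for the $\mathcal{L}_{c_0}v^\epsilon$ contribution (matching the statement \eqref{ec6}), whereas the paper records $C\|v^\epsilon\|_{L^2}^2$ in \eqref{ec7} and \eqref{ec5z}; your version is the one actually delivered by the integration-by-parts manoeuvre and is what the lemma claims.
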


\begin{proof}[Proof of Theorem \ref{th3}]

By \eqref{i5}, we have
\begin{align}\label{ec10}
\p_x\mathcal{L}_{c^\epsilon}\eta^\epsilon=&\p_x\mathcal{L}_{c_0}\eta^\epsilon  +g(\eta^\epsilon),
\end{align}
where
\begin{align}\label{ecg}
g(\eta^\epsilon)=& -2\p_x[(c^\epsilon-c_0-\vp_{c^\epsilon}+\vp_{c_0})\p_x]\ee -6\p_x((\vp_{c^\epsilon}-\vp_{c_0})\ee)\nonumber\\& +2\p_x(\p_x^2(\vp_{c^\epsilon}-\vp_{c_0})\ee).
\end{align}
Let $v^\epsilon=\ee-\eta.$ Then by \eqref{me1}, \eqref{eta} and \eqref{ec10}
\begin{align}\label{ec9}
dv^\epsilon=&\frac12(1-\p_x^2)^{-1}\p_x\mathcal{L}_{c_0}v^\epsilon dt +\frac12(1-\p_x^2)^{-1}g(\ee)dt
+( y^\epsilon \p_x\vp_{c^\epsilon}- y\p_x\vp_{c_0}) dt\nonumber\\
&-(a^\epsilon\p_c\vp_{c^\epsilon}-a\p_c\vp_{c_0}) dt
+\epsilon y^\epsilon \ee_x dt+\epsilon f(\ee)dt+ h^\eps\diamond dL(t),
\end{align}
where
\begin{align*}
h^\epsilon=\sigma\p_x(\vp_{c^\epsilon}-\vp_{c_0})+\p_x\vp_{c^\epsilon} \mu^\epsilon-\p_x\vp_{c_0} \mu-\p_c\vp_{c^\epsilon}b^\epsilon+\p_c\vp_{c_0}b
+\epsilon (\sigma\ee_x+\ee_x\mu^\epsilon).
\end{align*}
By H\"older inequality,
\begin{align}\label{ec9h1}
\|h^\epsilon\|_{L^2}\leq C(|c^\eps-c_0|+|b^\eps-b|+|\mu^\eps-\mu|+\eps\|\ee\|_{H^1}):=\gamma^\eps.
\end{align}

Let $\beta(t,x)=\Phi_2(t,z,v)$ solves the following differential equation
\[
\frac{d\beta}{dt}=zh^\eps,\;\;\beta(0,x)=v(x).
\]
Then $\Phi_2(t,z,v)=v+z\int_0^th^\eps(r)dr$. Using H\"older inequality and \eqref{ec9h1}, we have
\begin{align*}
&\|\beta(t,x)\|_{L^2}^2=\|\beta(0,x)\|_{L^2}^2+2z\int_0^t(\beta, h^\eps)ds\\
\leq&\|v(x)\|_{L^2}^2+C|z|\int_0^t\|\beta\|_{L^2}\|h^\eps\|_{L^2}ds\\
\leq&\|v(x)\|_{L^2}^2+C|z|\int_0^t\|\beta\|_{L^2}^2+\|h^\eps\|^2_{L^2}ds\\
\leq&\|v(x)\|_{L^2}^2+C|z|\gamma^{\eps2}+C|z|\int_0^t\|\beta\|_{L^2}^2ds.
\end{align*}
The Gronwall's inequality implies
\begin{align}\label{ec9h2}
\|\beta(t,x)\|_{L^2}^2=\|\Phi_2(t,z,v)\|_{L^2}^2\leq(\|v(x)\|_{L^2}^2+C|z|\gamma^{\eps2})e^{C|z|t}.
\end{align}

Applying It\^o formula Lemma \ref{ito2}, we have
\begin{align}\label{ec12}
&\|v^\epsilon\|_{L^2}^2
=\int_0^t(\frac12(1-\p_x^2)^{-1}\p_x\mathcal{L}_{c_0}v^\epsilon, v^\epsilon)ds
+\int_0^t(\frac12(1-\p_x^2)^{-1}g(\ee), v^\epsilon)ds\nonumber\\
&+\int_0^t(( y^\epsilon \p_x\vp_{c^\epsilon}- y\p_x\vp_{c_0}), v^\epsilon)ds
-\int_0^t((a^\epsilon\p_c\vp_{c^\epsilon}-a\p_c\vp_{c_0}), v^\epsilon)ds\nonumber\\
&-\int_0^t(\epsilon y^\epsilon \p_x\ee +\epsilon f(\ee), v^\epsilon)ds\
+\int_0^t\int_{\m{Z}}
[\|\Phi_2(1,z,v^\epsilon(s-))\|_{L^2}^2-\|v^\epsilon(s-)\|_{L^2}^2]\tilde{\mathcal{N}}(ds,dz)\nonumber\\
&+\int_0^t\int_{\m{Z}}
[\|\Phi_2(1,z,v^\epsilon(s))\|_{L^2}^2-\|v^\epsilon\|_{L^2}^2-2z(v^\epsilon, h^\eps)]\vartheta(dz)ds\nonumber\\
=:&\sum_{i=1}^7K_i.
\end{align}
Using \eqref{i5}, we can write $K_1=K_{11}+K_{12}$, where
\begin{align*}
K_{11}=&\int_0^t(-(1-\p_x^2)^{-1}\partial_x^2((c_0-\vp_{c_0})\p_xv^\epsilon), v^\epsilon)ds,\\
K_{12}=&\int_0^t(-3(1-\p_x^2)^{-1}\p_x(\vp_{c_0}v^\epsilon)
+(1-\p_x^2)^{-1}\p_x(\p_x^2\vp_{c_0}v^\epsilon)\nonumber\\
&+(c_0-2k)(1-\p_x^2)^{-1}\p_xv^\epsilon, v^\epsilon)ds.
\end{align*}
Since $-(1-\p_x^2)^{-1}\partial_x^2=I-(1-\p_x^2)^{-1}$, using integration by parts, Cauchy inequality and embedding theorem, we have
\begin{align}\label{ec13-3}
K_{11}=&\int_0^t((c_0-\vp_{c_0})\p_xv^\epsilon, v^\epsilon)ds-\int_0^t((1-\p_x^2)^{-1}((c_0-\vp_{c_0})\p_xv^\epsilon), v^\epsilon)ds\nonumber\\
=&-\frac12\int_0^t((c_0-\p_x\vp_{c_0})v^\epsilon, v^\epsilon)ds-\int_0^t((1-\p_x^2)^{-1}\p_x((c_0-\vp_{c_0})v^\epsilon), v^\epsilon)ds\nonumber\\
&+\int_0^t((1-\p_x^2)^{-1}((c_0-\p_x\vp_{c_0})v^\epsilon), v^\epsilon)ds\nonumber\\
\leq & C(1+\|\vp_{c_0}\|_{H^2})\int_0^t\|v^\epsilon\|_{L^2}^2ds\nonumber\\
\leq&C\int_0^t\|v^\epsilon\|_{L^2}^2ds,
\end{align}
and
\begin{align}\label{ec13-4}
K_{12}\leq&C\int_0^t\|v^\epsilon\|_{L^2}^2ds.
\end{align}
By \eqref{ecg}, integration by parts, Corollary \ref{co41}, H\"older and Young inequalities,
\begin{align}\label{ec14}
K_2\leq&\int_0^t \|(c^\epsilon-c_0-\vp_{c^\epsilon}+\vp_{c_0})\|_{L^\infty}\|\p_x\ee\|_{L^2}\|v^\epsilon\|_{L^2}ds\nonumber\\
&
+\int_0^t \|\vp_{c^\epsilon}-\vp_{c_0}\|_{L^\infty}\|\ee\|_{L^2}\|v^\epsilon\|_{L^2}ds\nonumber\\
&+\int_0^t \|\p_x^2\vp_{c^\epsilon}-\p_x^2\vp_{c_0}\|_{L^\infty}\|\ee\|_{L^2}\|v^\epsilon\|_{L^2}ds\nonumber\\
\leq&C\int_0^t |c^\epsilon-c_0|^2\|\ee\|_{H^1}^2+\|v^\epsilon\|_{L^2}^2ds\nonumber\\
\leq&C\epsilon(1+\sup_{t\in[0,T]} \|\ee\|_{H^1}^6)+C\int_0^t \|v^\epsilon\|_{L^2}^2ds.
\end{align}
It follows from \eqref{ec6} that
\begin{align}\label{ec15}
K_3+K_4\leq C\int_0^t\|v^\epsilon\|_{L^2}^2ds+C\epsilon (1+\sup_{t\in[0,T]}\|\ee\|_{H^1}^4).
\end{align}
By Lemma \ref{lm43}, H\"older and Young inequalities,
\begin{align}\label{ec16}
K_5\leq& C\epsilon^2 \int_0^t(| y^\epsilon|^2\|\p_x\ee\|_{L^2}^2+\|f(\ee)\|_{L^2}^2)ds+C\int_0^t\|v^\epsilon\|_{L^2}^2ds\nonumber\\
\leq&C\epsilon^2\sup_{t\in[0,T\wedge \tau^\epsilon_\alpha]}  \|\ee\|_{H^1}^4+C\int_0^t\|v^\epsilon\|_{L^2}^2ds.
\end{align}
By BDG inequality, \eqref{ec6a} and \eqref{ec9h2}, we have
\begin{align}\label{ec17}
&\m{E}\sup_{t\in[0,T\wedge \tau^\epsilon_\alpha]}K_6\nonumber\\
\leq &C\m{E}\int_0^{T\wedge\tau_\alpha^\epsilon}\left(\int_{\m{Z}}
[(\|v^\eps\|_{L^2}^2+C|z|\gamma^{\eps2})e^{C|z|t}+\|v^\epsilon(s)\|_{L^2}^2]^2\vartheta(dz)\right)^{1/2}ds\nonumber\\
\leq &C\me\gamma^{\eps2}
+C\m{E}\int_0^{T\wedge\tau_\alpha^\epsilon}\|v^\epsilon(s)\|_{L^2}^2ds.
\end{align}
Similarly, we have
\begin{align}\label{ec17}
&\m{E}\sup_{t\in[0,T\wedge \tau^\epsilon_\alpha]}K_7
\leq C\me\gamma^{\eps2}
+C\m{E}\int_0^{T\wedge\tau_\alpha^\epsilon}\|v^\epsilon(s)\|_{L^2}^2ds.
\end{align}
It follows from \eqref{ec12}-\eqref{ec17} that
\begin{align*}
\m{E}\sup_{t\in[0,T\wedge \tau^\epsilon_\alpha]}\|v^\epsilon\|_{L^2}^2\leq&C\m{E}\int_0^{T\wedge\tau_\alpha^\epsilon}\|v^\epsilon\|_{L^2}^2ds+C\epsilon(1+\m{E}\sup_{t\in[0,T\wedge \tau^\epsilon_\alpha]}\|\ee\|_{H^1}^4)+C\me\gamma^{\eps2},
\end{align*}
from which and Gronwall inequality implies
\begin{align}\label{ec18}
\m{E}\sup_{t\in[0,T\wedge \tau^\epsilon_\alpha]} \|v^\epsilon\|_{L^2}^2\leq&C\epsilon(1+\m{E}\sup_{t\in[0,T\wedge \tau^\epsilon_\alpha]}\|\ee\|_{H^1}^4)e^{CT}+C\me\gamma^{\eps2}.
\end{align}
Then, by Lemma \ref{lemma51}, it follows that $v^\epsilon\to0$ as $\epsilon\to0$ in probability in the space $\m{D}([0,T\wedge \tau^\epsilon_\alpha]; L^2)$.
\end{proof}

\smallskip

\appendix

%
%
%
%
%
%
%

\smallskip

\noindent {\bf Acknowledgements.}
We are indebted to the referee for careful reading of the manuscript
and for their comments, which have improved the present work.
This work is partially supported by
China NSF Grant Nos. 12171084, 12071434, 12071435, Zhejiang Provincial NSF of China under Grant No. LZJWY22E060002 and the fundamental Research Funds for the Central Universities No. 2242022R10013.





\end{document}